\newtheorem{theorem}{Theorem}
\newaliascnt{lemma}{theorem}
\newtheorem{lemma}[lemma]{Lemma}
\newaliascnt{proposition}{theorem}
\newtheorem{proposition}[proposition]{Proposition}
\newaliascnt{corollary}{theorem}
\newtheorem{corollary}[corollary]{Corollary}
\newaliascnt{conjecture}{theorem}
\newaliascnt{example}{theorem}
\newtheorem{example}[example]{Example}
\def\tagform@#1{\maketag@@@{\ignorespaces#1\unskip\@@italiccorr}}
\let\orgtheequation\theequation
\def\theequation{(\orgtheequation)}
\def\equationautorefname~{}
\newcommand{\arxiv}[1]{%
 \href{http://front.math.ucdavis.edu/#1}{ArXiv:#1}}
\DeclareMathOperator*{\argmax}{\arg\!\max}
\DeclareMathOperator*{\argmin}{\arg\!\min}
\newcommand{\N}{{\mathbb N}}
\newcommand{\cN}{\mathcal{N}}
\newcommand{\R}{{\mathbb R}}
\newcommand{\cS}{\mathcal{S}}
\newcommand{\Z}{{\mathbb Z}}
\newcommand{\area}{\operatorname{Area}}
\newcommand{\ud}{\,\mathrm{d}}
\begin{document}

\title[Lattice points under convex curves]{Optimal stretching for lattice points under convex curves}
\author[]{Sinan Ariturk and Richard S. Laugesen}
\address{Pontif\'icia Universidade Cat\'olica do Rio de Janeiro, Brazil}
\email{ariturk\@@mat.puc-rio.br}
\address{Department of Mathematics, University of Illinois, Urbana,
IL 61801, U.S.A.}
\email{Laugesen\@@illinois.edu}
\date{\today}

\keywords{Lattice points, planar domain, $p$-ellipse}
\subjclass[2010]{\text{Primary 35P15. Secondary 11H06, 11P21, 52C05}}

\begin{abstract}
Suppose we count the positive integer lattice points beneath a convex decreasing curve in the first quadrant having equal intercepts. Then stretch in the coordinate directions so as to preserve the area under the curve, and again count  lattice points. Which choice of stretch factor will maximize the lattice point count? We show the optimal stretch factor approaches $1$ as the area approaches infinity. In particular, when $0<p<1$, among $p$-ellipses $|sx|^p+|s^{-1}y|^p=r^p$ with $s>0$, the one enclosing the most first-quadrant lattice points approaches a $p$-circle ($s=1$) as $r \to \infty$. 

The case $p=2$ was established by Antunes and Freitas, with generalization to $1<p<\infty$ by Laugesen and Liu. The case $p=1$ remains open, where the question is: which right triangles in the first quadrant with two sides along the axes will enclose the most lattice points, as the area tends to infinity? 

Our results for $p<1$ lend support to the conjecture that in all dimensions, the rectangular box of given volume that minimizes the $n$-th eigenvalue of the Dirichlet Laplacian will approach a cube as $n \to \infty$. This conjecture remains open in dimensions four and higher. 
\end{abstract}

\maketitle

\section{\bf Introduction}

This article tackles a variant of the Gauss circle problem motivated by shape optimization results for eigenvalues of the Laplacian, as explained in the next section. The circle problem asks for good estimates on the number of integer lattice points contained in a circle of radius $r>0$. Gauss showed this lattice point count equals the area of the circle plus an error of magnitude $O(r)$ as $r \to \infty$. The current best estimate, due to Huxley \cite{Hux03}, improves the error bound to $O(r^{\theta+\epsilon})$ for $\theta = 131/208$, which is still quite far from the exponent  $\theta =1/2$ conjectured by Hardy \cite{H15}. 

One may count lattice points inside other curves than circles, and may further seek to maximize the number of lattice points with respect to families of curves all enclosing the same area. Such maximization problems are the focus of this paper, concerning curves and lattice points in the first quadrant. 

Consider a convex decreasing curve in the first quadrant that intercepts the horizontal and vertical axes. For example, fix $0<p<1$ and consider the $p$-ellipse 
\begin{equation}
\label{superellipse}
	(sx)^p + (y/s)^p = r^p ,
\end{equation}
where $r,s>0$. This $p$-ellipse is obtained by stretching the $p$-circle from \autoref{fig:p-ellipsefig} in the coordinate directions by factors $s$ and $s^{-1}$ and then dilating by the scale factor $r$. 
\begin{figure}
\includegraphics[scale=0.35]{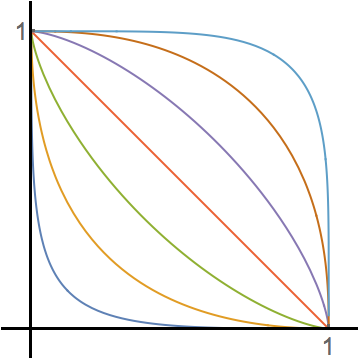}
\caption{\label{fig:p-ellipsefig}The family of $p$-circles $x^p+y^p=1$. \autoref{th:main1} and \autoref{ex:p-ellipse} handle the convex case $0<p<1$. The concave case $1<p<\infty$ was treated in \cite{LL16}. The straight line case $p=1$ remains open.}
\end{figure}
Note the $p$-ellipse has semi-axes $rs$ and $rs^{-1}$, and has area $A(r)$ depending only on the ``radius'' $r$, not on the stretch parameter $s$. Write $N(r,s)$ for the number of positive-integer lattice points lying below the curve, and for each fixed $r$, denote by $S(r)$ the set of $s$-values maximizing $N(r,s)$. In other words, $s \in S(r)$ maximizes the first-quadrant lattice point count among all $p$-ellipses having area $A(r)$. 

Our main theorem implies that these maximizing $s$-values converge to $1$ as $r$ goes to infinity. That is, the $p$-ellipses that contain the most positive-integer lattice points must have semi-axes of almost equal length, for large $r$, and thus can be described as ``asymptotically balanced''. This result in \autoref{ex:p-ellipse} is an application of \autoref{th:main1}, which handles much more general convex decreasing curves.

For nonnegative-integer lattice points, meaning we include also the lattice points on the axes, the problem is to minimize rather than maximize the number of enclosed lattice points. For that problem too we prove optimal curves are asymptotically balanced.

A key step in the proof is to establish a precise estimate on the number of positive-integer lattice points under the graph of a convex decreasing function, in \autoref{th:asymptotic}. This estimate builds on the corresponding estimate for concave functions, namely the work of Laugesen and Liu \cite{LL16} on the case $1<p<\infty$ and generalizations, which in turn was based on work of Kr\"{a}tzel \cite{kratzel04}. Our proof starts by observing that the convex and concave problems are complementary, as one sees by enclosing the convex curve in a suitable rectangle and regarding the lattice points above the curve as being lattice points beneath the ``upside down'' concave curve. 

\section{\bf Eigenvalues of the Laplacian, and open problems}

In this expository section we connect lattice point counting results to shape optimization problems on eigenvalues of the Laplacian. Open problems for eigenvalues arise naturally in this context. 

\subsection*{\bf Eigenvalues of the Laplacian} The asymptotic counting function maximization problem was initiated by Antunes and Freitas \cite{AF13}, who solved the problem for positive-integer lattice points inside standard ellipses. That is, they established the case  $p=2$ of the previous section. Their result was formulated in terms of shape optimization for Laplace eigenvalues, as we proceed to explain. 

For a bounded domain $\Omega \subset \R^d$, the eigenvalue problem for the Laplacian with Dirichlet boundary conditions is:
\[
	\begin{cases}
		-\Delta u = \lambda u & \text{in } \Omega , \\
		\hspace{0.7cm} u = 0 & \text{on } \partial \Omega ,
	\end{cases}
\]
where the eigenvalues form an increasing sequence
\[
	0 < \lambda_1(\Omega) \le \lambda_2(\Omega) \le \lambda_3(\Omega) \le \dots .
\]
The relationship between the domain $\Omega$ and its associated eigenvalues is complicated.
A classical problem is to determine the domain having given volume that minimizes the $n$-th eigenvalue.
A ball minimizes the first eigenvalue, by the Faber--Krahn inequality, and the union of two disjoint balls having the same radius minimizes the second eigenvalue, by the Krahn--Szego inequality. Domains that minimize higher eigenvalues do exist \cite{B12,BH00}, although the minimizing domains are not known explicitly. In two dimensions, a disk is conjectured to minimize the third eigenvalue, and more generally it is an open problem to determine whether a ball in $d$ dimensions minimizes the $(d+1)$-st eigenvalue \cite[p. 82]{H06}. Minimizing domains have been studied numerically by Oudet \cite{Oud04}, Antunes and Freitas \cite{AF12}, and Antunes and Oudet \cite{AO}, \cite[Chapter~11]{H17}. 

A challenging open problem is to determine the asymptotic behavior as $n \to \infty$ of the domain (or domains) minimizing the $n$-th eigenvalue. To gain insight, let us write $M(\lambda)$ for the number of eigenvalues less than or equal to the parameter $\lambda$, and recall that the Weyl conjecture claims
\[
	M(\lambda) = \omega_d (2\pi)^{-d} | \Omega | \lambda^{d/2} - (1/4) \omega_{d-1} (2\pi)^{1-d} | \partial \Omega | \lambda^{(d-1)/2} + o(\lambda^{(d-1)/2}) 
\]
where $\omega_d$ is the volume of the unit ball in $\R^d$. This asymptotic formula for the counting function was verified by Ivrii \cite{I80} under a generic assumption for piecewise smooth domains, namely that the periodic billiards have measure zero. The appearance of the perimeter in the second term of this formula might suggest that the domain minimizing the $n$-th eigenvalue (or maximizing the counting function $M(\lambda)$), under our assumption of fixed volume, should converge to a ball because the ball has minimal perimeter by the isoperimetric theorem. 

This heuristic does not amount to a proof, though, since the order of operations is wrong: our task is not to fix a domain and then let $n \to \infty$ ($\lambda \to \infty$), but rather to minimize the eigenvalue over all domains for $n$ fixed (maximize the counting function for $\lambda$ fixed) and only then let $n \to \infty$ ($\lambda \to \infty$).

It is an open problem to determine whether the eigenvalue-minimizing domain converges to a ball as $n \to \infty$. The problem is easier if the perimeter is fixed, and in that case Bucur and Freitas \cite{BF13} showed that eigenvalue minimizing domains do indeed converge to a disk, in dimension two.

Antunes and Freitas \cite{AF13} solved the problem in the class of rectangles under area normalization, as follows. Let $R(s)$ be the rectangle $(0,\pi/s) \times (0, s \pi)$, whose area equals $\pi^2$ for all $s$. For each $n$, choose a number $s_n>0$ such that $R(s_n)$ minimizes the $n$-th Dirichlet eigenvalue of the Laplacian. That is, choose $s_n$ such that
\[
	\lambda_n \big( R(s_n) \big) = \min_{s > 0} \lambda_n \big( R(s) \big) .
\]
Antunes and Freitas showed $s_n \to 1$ as $n \to \infty$, meaning that the rectangles $R(s_n)$ converge to a square. The analogous result for three-dimensional rectangular boxes was later established by van den Berg and Gittins \cite{BG16}. The problem remains open in dimensions four and higher. Once again, the problem is easier if the surface area is fixed, and in that case Antunes and Freitas \cite{AF16} showed that rectangular boxes which minimize the $n$-th Dirichlet eigenvalue of the Laplacian must converge to a cube, in any dimension.

The eigenvalues of the Laplacian on a rectangle are closely connected to lattice point counting: the eigenfunction $u=\sin(jsx) \sin(ky/s)$ on the rectangle $R(s)$ has eigenvalue $\lambda = (js)^2+(k/s)^2$, for $j,k>0$, and this eigenvalue is less than or equal to some number $r^2$ if and only if the lattice point $(j,k)$ lies inside the ellipse with semi-axes $s^{-1}$ and $s$ and radius $r$. Thus the result of Antunes and Freitas on asymptotically minimizing the $n$-th eigenvalue among rectangles of given area is essentially equivalent to asymptotically maximizing the number of first-quadrant lattice points enclosed by ellipses of given area --- and that is how their proof proceeded. 

\subsection*{A conjecture on product domains}
The conjecture for rectangular boxes in higher dimensions is supported by results in this paper, as follows. More generally, fix a bounded domain $\Omega \subset \R^d$, and for $s>0$ define a product domain 
\[
P(s) = (s^{-1/d} \Omega) \times (s^{1/d} \Omega) \subset \R^{2d} .
\]
For each $n$, choose $s_n$ to minimize the $n$-th Dirichlet eigenvalue of the Laplacian on the product domain.
It is natural to ask whether $s_n \to 1$ as $n \to \infty$, and our results suggest this might be the case.

Observe that the eigenvalues of $P(s)$ are given by $s^{2/d} \lambda_j(\Omega) + s^{-2/d} \lambda_k(\Omega)$ for $j,k > 0$. Without loss of generality, assume $\Omega$ has volume $(2\pi)^d/\omega_d$. 
Then the first-order Weyl approximation is $\lambda_n(\Omega) \sim n^{2/d}$. Using this approximation, we may approximate the eigenvalues of $P(s)$ by $s^{2/d} j^{2/d} + s^{-2/d} k^{2/d}$. That is, for $r>0$ the number of ``approximate eigenvalues'' less than $r^{2/d}$ is given by the number of positive-integer lattice points inside the $p$-ellipse \autoref{superellipse}, with $p=2/d$. 

If $d \ge 3$ then $p=2/d<1$, and so our \autoref{ex:p-ellipse} applies to the approximate eigenvalues.
Thus if $s_n$ were chosen to minimize the $n$-th ``approximate eigenvalue'' of $P(s)$, then $s_n$ would converge to $1$ as $n \to \infty$. This observation suggests the same might hold true for the $s_n$-value minimizing the actual $n$-th eigenvalue of the product domain. In particular, it seems reasonable to believe that the analogue of the Antunes--Freitas (and van den Berg--Gittins) result will hold for rectangular boxes in all even dimensions $\geq 6$, and presumably also in odd dimensions $\geq 5$. The evidence is hardly conclusive, of course, since not every rectangular box has the product form $P(s)$ and furthermore we used only the leading order term in the Weyl asymptotic. 

The preceding argument does not apply in $4$ dimensions: even if a $4$-dimensional box can be expressed as a product of two $2$-dimensional boxes, taking $d=2$ gives the borderline case $p=2/d=1$, and for $p=1$ the lattice point maximizing value $s_n$ does not seem to approach $1$ as $n \to \infty$ \cite[Section~9]{LL16}. Thus one might expect the conjecture on rectangular boxes to be hardest to prove in $4$ dimensions. 

\subsection*{More general domains}
Among more general convex domains with just a little regularity, Larson \cite{L16} has shown the ball asymptotically maximizes the Riesz means of the Laplace eigenvalues, for Riesz exponents $\geq 3/2$ in all dimensions. If the exponent could be lowered to $0$ in this result, then the ball would asymptotically maximize the counting function of individual eigenvalues. Incidentally, Larson also shows the cube is asymptotically optimal among polytopes, for the Riesz means. 

Thus the current state of knowledge is that asymptotic optimality holds for the individual eigenvalues if one restricts to rectangular boxes in $2$ or $3$ dimensions, and holds among more general convex domains and polytopes if one restricts to weaker eigenvalue functionals, namely the Riesz means of exponent $\geq 3/2$.

\section{\bf Assumptions and definitions}
\label{sec:assumptions}

By convention, the first quadrant is the \emph{open} set $\{ (x,y) : x,y>0 \}$. Take $\Gamma$ throughout the paper to be a convex, strictly decreasing curve in the first quadrant that intercepts the $x$- and $y$-axes at $x=L$ and $y=M$, as illustrated in \autoref{fig:gammafig}. Write $\area(\Gamma)$ for the area enclosed by the curve $\Gamma$ and the $x$- and $y$-axes. 

\begin{figure}
\includegraphics[scale=0.35]{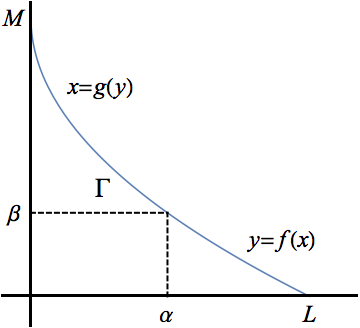}
\caption{\label{fig:gammafig}A convex decreasing curve $\Gamma$ in the first quadrant, with intercepts $L$ and $M$. The point $(\alpha,\beta)$ arises in \autoref{th:main1}.}
\end{figure}

Represent the curve as the graph of $y=f(x)$, so that $f$ is a convex strictly decreasing function for $x\in [0,L]$, and  
\[
M=f(0)>f(x)>f(L)=0 \qquad \text{whenever $x\in (0,L)$.}
\]
Denote the inverse function of $f(x)$ by $g(y)$ for $y \in[0,M]$. Clearly $g$ is also convex and strictly decreasing.

Compress the curve by a factor of $s>0$ in the horizontal direction and stretch it by the same factor in the vertical direction to obtain the curve 
\[
\Gamma(s) = \text{graph of $sf({sx})$.}
\]
The area under $\Gamma(s)$ equals the area under $\Gamma$. Then scale the curve by parameter $r>0$ to obtain:
\begin{align*}
r\Gamma(s)
& = \text{image of $\Gamma(s)$ under the radial scaling $(x,y) \mapsto (rx,ry)$} \\
& = \text{graph of $rsf({sx/r})$.}
\end{align*}
Define the counting function for 
$r\Gamma(s)$ by 
\begin{align*}
N(r,s) &=\text{number of positive-integer lattice points lying inside or on $r\Gamma(s)$ } \\
&=\# \big\{ (j,k)\in\mathbb{N} \times \mathbb{N}:k\leq rsf(js/r) \big\}. 
\end{align*}
For each $r>0$, we consider the set
\[
S(r) = \argmax_{s >0} N(r,s) 
\]
consisting of the $s$-values that maximize the number of first-quadrant lattice points enclosed by the curve $r\Gamma(s)$. The set $S(r)$ is well-defined because for each fixed $r$, the counting function $N(r,s)$ equals zero whenever $s$ is sufficiently large or sufficiently close to $0$.

\section{\bf Results}
\label{sec:mainresults}

Recall $g$ is the inverse function of $f$, as illustrated in \autoref{fig:gammafig}. 
\begin{theorem}[Optimal convex curve is asymptotically balanced]\label{th:main1}
Assume $(\alpha,\beta) \in \Gamma$ is a point in the first quadrant with $\alpha < L/2, \beta < M/2$, such that $f \in C^2[\alpha,L)$ with $f'<0$ and $f''>0$ on $[\alpha,L)$, and similarly $g\in C^2[\beta,M)$ with $g'<0$ and $g''>0$ on $[\beta,M)$. Further suppose there is a partition $\alpha=\alpha_0 < \alpha_1 < \ldots < \alpha_m = L$ such that $f''$ is monotonic over each subinterval $(\alpha_i,\alpha_{i+1})$, and a partition $\beta=\beta_0 < \beta_1 < \ldots < \beta_n = M$ such that $g''$ is monotonic over each subinterval $(\beta_i,\beta_{i+1})$. Moreover, assume there are functions $\delta:(0, \infty) \to (0, L/2 - \alpha)$ and $\epsilon:(0, \infty) \to (0, M/2-\beta)$ and positive constants $a_1, a_2, b_1, b_2$ such that as $r \to \infty$,
\begin{align*}
	\delta(r) = O(r^{-2a_1}) , \quad \quad \frac{1}{f'' \big( L-\delta(r) \big) } = O(r^{1-4a_2}) , \\
	\epsilon(r) = O(r^{-2b_1}) , \quad \quad \frac{1}{g'' \big( M-\epsilon(r) \big)} = O(r^{1-4b_2}) .
\end{align*}

Assume the intercepts of $\Gamma$ are equal ($L=M$). Then the optimal stretch factor for maximizing $N(r,s)$ approaches $1$ as $r$ tends to infinity, with
\[
S(r) \subset \big[ 1-O(r^{-e}),1+O(r^{-e}) \big] 
\]
where the exponent is $e=\min(\frac{1}{6}, a_1, a_2, b_1, b_2)$. Further, the maximal lattice count has asymptotic formula 
\[
\max_{s > 0} N(r,s) = r^2\area (\Gamma)-rL + O(r^{1-2e}) .
\]
\end{theorem}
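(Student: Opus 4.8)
The plan is to reduce \autoref{th:main1} to an optimization of explicit leading-order terms, the main tool being the lattice-point estimate of \autoref{th:asymptotic}. I would first apply \autoref{th:asymptotic} to the rescaled curve $r\Gamma(s)$, which is the graph of $x \mapsto rsf(sx/r)$ on $[0,rL/s]$ and has $y$-intercept $rsM$. Since $\Gamma(s)$ encloses the same area as $\Gamma$, this curve encloses area $r^2\area(\Gamma)$ no matter what $s$ is, and the expected output is a two-term asymptotic expansion
\[
	N(r,s) = r^2\area(\Gamma) - \frac{1}{2}\left( \frac{rL}{s} + rsM \right) + O(r^{1-2e}),
\]
valid \emph{uniformly} for $s$ in a range wide enough to contain every maximizer; here the middle term is half the sum of the two intercepts $rL/s$ and $rsM$, and $e = \min(\tfrac16,a_1,a_2,b_1,b_2)$. (The reason such a range contains every maximizer is that for $s$ outside it the curve $r\Gamma(s)$ is so elongated --- one of its intercepts being tiny --- that $N(r,s)$ is far below $N(r,1)\sim r^2\area(\Gamma)$.) The exponent $\tfrac16$ records the intrinsic van der Corput-type error $O(r^{2/3})$ coming from the smooth main arc, while $a_1,a_2,b_1,b_2$ control the two thin neighborhoods of the intercepts, where $f$ or $g$ can flatten and lose curvature.

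Establishing this expansion with error uniform in $s$ is the step I expect to be the main obstacle. Under the substitution $f \mapsto rsf(sx/r)$ the second derivative is multiplied by $s^3/r$ while the domain is dilated by $r/s$, so the curvature bounds that \autoref{th:asymptotic} requires vary with both $r$ and $s$; the hypotheses of \autoref{th:main1} are calibrated precisely so that the conclusion of \autoref{th:asymptotic} survives this rescaling uniformly. Concretely, the cutoffs $\delta(r)$ and $\epsilon(r)$ quarantine the neighborhoods of the $x$- and $y$-intercepts; the decay rates $\delta(r) = O(r^{-2a_1})$ and $\epsilon(r) = O(r^{-2b_1})$ keep the lattice-point contribution of those quarantined strips within the $O(r^{1-2e})$ budget; and the curvature rates $1/f''(L-\delta(r)) = O(r^{1-4a_2})$ and $1/g''(M-\epsilon(r)) = O(r^{1-4b_2})$ keep the main arc curved enough that the second-derivative (van der Corput) estimates on the remainder also stay within budget. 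The symmetric roles of $f$ and $g$ in the hypotheses reflect the complementarity built into \autoref{th:asymptotic}: the lattice points above the convex curve are exactly the lattice points below an associated upside-down concave curve, which is described using both $f$ and its inverse $g$; and the piecewise monotonicity of $f''$ and $g''$ is what permits the second-derivative tests subinterval by subinterval. Propagating all of these estimates through the $(r,s)$-rescaling, and verifying that nothing degrades below $r^{1-2e}$, is the technical core of the argument.

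Granting the expansion, the rest is elementary. Now impose the hypothesis $L=M$, so that the middle term becomes $\tfrac{rL}{2}(s+s^{-1})$, and recall that $s+s^{-1}\ge 2$ with equality only when $s=1$. Evaluating the expansion at $s=1$ gives the lower bound $\max_{s>0}N(r,s) \ge N(r,1) = r^2\area(\Gamma) - rL + O(r^{1-2e})$, while the expansion together with $s+s^{-1}\ge 2$ gives the matching upper bound $N(r,s) \le r^2\area(\Gamma) - rL + O(r^{1-2e})$ for every admissible $s$; combining these proves the asymptotic formula $\max_{s>0}N(r,s) = r^2\area(\Gamma) - rL + O(r^{1-2e})$. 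For the localization of $S(r)$, suppose $s\in S(r)$, so that $N(r,s)$ equals this maximum. Comparing with the expansion yields $\tfrac{rL}{2}\big(s+s^{-1}-2\big) = O(r^{1-2e})$, that is, $s+s^{-1}-2 = O(r^{-2e})$. Since $s+s^{-1}-2 = (s-1)^2/s$, this inequality already confines $s$ to a fixed compact subinterval of $(0,\infty)$ for large $r$, and on that subinterval it gives $(s-1)^2 = O(r^{-2e})$, hence $|s-1| = O(r^{-e})$. Therefore $S(r) \subset \big[1-O(r^{-e}),\,1+O(r^{-e})\big]$, which completes the proof.
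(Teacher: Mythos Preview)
Your plan matches the paper's three-step argument almost exactly: invoke \autoref{th:asymptotic}(b) to get the two-term expansion with explicit $s$-dependence in the remainder, establish that maximizers lie in a fixed bounded range so the remainder becomes $O(r^{1-2e})$ uniformly, and then run the elementary comparison $s+s^{-1}-2=(s-1)^2/s$ (the paper packages this as \autoref{le:squarecompletion}).

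The one place you hand-wave is the a priori boundedness of $S(r)$. You say ``for $s$ outside it the curve $r\Gamma(s)$ is so elongated \ldots\ that $N(r,s)$ is far below $N(r,1)$,'' but the expansion from \autoref{th:asymptotic} cannot itself deliver this, because its error terms carry factors like $s^{\pm 3/2}$ and $s^{\pm 2}$ and so blow up as $s\to 0$ or $s\to\infty$. The paper closes this gap with a separate convexity-based upper bound, \autoref{prop:counting_positive}, giving $N(r,s)\le r^2\area(\Gamma)-\tfrac12 f(L/2)\,rs$ once $r\ge 2s/L$; combined with the crude bracket $S(r)\subset[2(rM)^{-1},\tfrac12 rL]$ from \autoref{lemma:bound2} and the lower bound $N(r,1)\ge r^2\area(\Gamma)-cr/2$ from the expansion at $s=1$, this forces $s\le c/f(L/2)$ (and symmetrically $s^{-1}$ bounded). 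Only after that does the paper feed the boundedness back into \autoref{th:asymptotic}(b) to get the uniform $O(r^{1-2e})$ remainder. So your outline is right, but be aware that the ``elongated curve'' step needs its own estimate independent of \autoref{th:asymptotic}.
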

The theorem is proved in \autoref{sec:mainproof}. The $C^2$-smoothness hypothesis can be weakened to piecewise smoothness, cf.\ \cite{LL16}, although for simplicity we will not do so here. 

The theorem simplifies considerably when the second derivatives are positive and monotonic all the way up to the endpoints:
\begin{corollary}\label{co:main1}
Assume $(\alpha,\beta) \in \Gamma$ is a point in the first quadrant with $\alpha < L/2, \beta < M/2$, such that $f \in C^2[\alpha,L]$ with $f'<0,f''>0$ and $f''$ monotonic, and $g\in C^2[\beta,M]$ with $g'<0, g''>0$ and $g''$ monotonic.

If the intercepts of $\Gamma$ are equal ($L=M$), then the optimal stretch factor for maximizing $N(r,s)$ approaches $1$ as $r$ tends to infinity, with
\[
S(r) \subset \big[ 1-O(r^{-1/6}),1+O(r^{-1/6}) \big] ,
\]
and the maximal lattice count satisfies $\max_{s > 0} N(r,s) = r^2\area (\Gamma)-rL + O(r^{2/3})$. 
\end{corollary}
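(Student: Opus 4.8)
The plan is to deduce Corollary \ref{co:main1} from Theorem \ref{th:main1} by verifying that the stronger hypotheses of the corollary imply the decay hypotheses required by the theorem, and then checking that the resulting exponent simplifies as claimed. So the first step is to note that under the corollary's hypotheses, $f\in C^2[\alpha,L]$ with $f''>0$ \emph{including at the endpoint} $x=L$; hence $f''(L)>0$ is a finite positive number, and by continuity $f''$ is bounded below by a positive constant on all of $[\alpha,L]$. The same holds for $g''$ on $[\beta,M]$. Moreover, the finite partitions required by Theorem \ref{th:main1} can be taken trivially with $m=n=1$, since $f''$ and $g''$ are assumed monotonic on the whole interval.

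The second step is to choose the auxiliary functions $\delta(r)$ and $\epsilon(r)$ and the constants $a_1,a_2,b_1,b_2$. Because $1/f''$ is now bounded (say by a constant $C$) uniformly on $[\alpha,L]$, the condition $1/f''(L-\delta(r))=O(r^{1-4a_2})$ holds for \emph{any} choice of $\delta(r)$ as long as $1-4a_2\ge 0$, i.e.\ we may take $a_2=1/4$. Similarly $b_2=1/4$. For the remaining conditions we need $\delta(r)=O(r^{-2a_1})$ and $\epsilon(r)=O(r^{-2b_1})$ with $\delta(r)\in(0,L/2-\alpha)$ and $\epsilon(r)\in(0,M/2-\beta)$; the simplest valid choice is to take $\delta$ and $\epsilon$ to be fixed constants in their required ranges, or to let them decay like $r^{-1/2}$ so that we may take $a_1=b_1=1/4$ as well. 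Either way all four constants can be taken equal to $1/4$.

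The third step is to apply Theorem \ref{th:main1} with these data. The hypothesis $L=M$ is inherited directly. The conclusion gives $S(r)\subset[1-O(r^{-e}),1+O(r^{-e})]$ with $e=\min(1/6,a_1,a_2,b_1,b_2)=\min(1/6,1/4,1/4,1/4,1/4)=1/6$, which is exactly the claimed exponent, and the maximal lattice count formula becomes $\max_{s>0}N(r,s)=r^2\area(\Gamma)-rL+O(r^{1-2e})=r^2\area(\Gamma)-rL+O(r^{2/3})$, again as claimed.

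I do not expect any genuine obstacle here: the corollary is purely a specialization of the theorem, and the only thing to be careful about is confirming that ``$f''>0$ and continuous on the \emph{closed} interval $[\alpha,L]$'' really does yield the uniform lower bound $f''\ge c>0$ (so that $1/f''$ is bounded) — this is immediate from the extreme value theorem — and that the required ranges $\delta(r)\in(0,L/2-\alpha)$ and $\epsilon(r)\in(0,M/2-\beta)$ are nonempty, which follows from the standing assumption $\alpha<L/2$, $\beta<M/2$. The one mild subtlety worth a sentence in the write-up is that $\delta$ and $\epsilon$ need not tend to $0$ for the corollary's proof — constants suffice — but choosing them to decay does no harm and keeps $a_1=b_1=1/4$ transparently valid.
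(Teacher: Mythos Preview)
Your approach is essentially the paper's: observe that $f\in C^2[\alpha,L]$ with $f''>0$ forces $f''(L)>0$, take the trivial partitions $m=n=1$, and then specialize \autoref{th:main1} with $a_2=b_2=1/4$ so that $e=\min(1/6,a_1,a_2,b_1,b_2)=1/6$. The paper picks $a_1=b_1=1/2$ rather than your $1/4$, but since only $e=1/6$ matters this is immaterial.

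One small correction: your side remark that constant $\delta,\epsilon$ ``suffice'' is not right, because \autoref{th:main1} requires the constants $a_1,b_1$ to be \emph{positive}, and a nonzero constant cannot be $O(r^{-2a_1})$ for any $a_1>0$. Keep only your decaying choice $\delta(r),\epsilon(r)\sim r^{-1/2}$ (or indeed any $r^{-c}$ with $c>0$, clipped to lie in the required range), and drop the parenthetical about constants.
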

The corollary follows by taking $a_1=b_1=1/2, a_2=b_2=1/4,e=1/6$ in the theorem and noting that $f''(L)>0$ and $g''(M)>0$ by assumption. 

\begin{example}[Optimal $p$-ellipses for lattice point counting]\rm \label{ex:p-ellipse}
Fix $0<p<1$, and consider the $p$-circle
\[
\Gamma : |x|^p+|y|^p=1 ,
\]
which has equal intercepts $L=M=1$. That is, the $p$-circle is the unit circle for the $\ell^p$-metric on the plane. Then the $p$-ellipse
\[
r\Gamma(s) : |sx|^p + |s^{-1}y|^p \leq r^p
\]
has first-quadrant counting function 
\[
N(r,s) = \# \{ (j,k) \in \N \times \N :  (js)^p+(ks^{-1})^p \leq r^p \} .
\]

We will show that the $p$-ellipse containing the maximum number of positive-integer lattice points must approach a $p$-circle in the limit as $r \to \infty$, with 
\[
S(r) \subset [1-O(r^{-e}),1+O(r^{-e})] 
\]
where $e=\min \{ \tfrac{1}{6},\tfrac{p}{2} \}$.

To verify that the $p$-circle satisfies the hypotheses of \autoref{th:main1}, we let $\alpha=\beta=2^{-1/p}$, so that $\alpha < 1/2=L/2$ and $\beta<1/2=M/2$. Then for $0<x<1$ we have
\begin{align*}
f(x) & = (1-x^p)^{1/p} , \\
f'(x) & = -x^{p-1} (1-x^p)^{-1+1/p} < 0, \\
f''(x) & =(1-p)x^{p-2}(1-x^p)^{-2+1/p} > 0, \\
f'''(x) & = (1-p) x^{p-3} (1 - x^p)^{-3 + 1/p} \big( (1+p)x^p + p-2 \big) .
\end{align*}
If $0<p \leq 1/2$ then $f'''<0$ on the interval $(0,1)$, and so $f''$ is monotonic. If $1/2<p<1$ then $f'''$ vanishes at exactly one point in the interval $(\alpha,1)$, namely at $\alpha_1=[(2-p)/(1+p)]^{1/p}$, and so $f''$ is monotonic on the subintervals $(\alpha,\alpha_1)$ and $(\alpha_1,1)$. Further, we choose $a_1=a_2=p/2$ and let $\delta(r)=r^{-2a_1}=r^{-p}$ for all large $r$, and verify directly that 
\[
\frac{1}{f'' \big( 1-\delta(r) \big) } = O(r^{1-2p}) = O(r^{1-4a_2}) .
\]
The calculations are the same for $g$, and so the desired conclusion for $p$-ellipses with $0<p<1$ now follows from \autoref{th:main1}. 

The case $1<p<\infty$ was treated earlier by Laugesen and Liu \cite[Example~4]{LL16}. They raised the case $p=1$ as an interesting open problem \cite[Section~9]{LL16}. Stated informally, one asks: which right triangle with two sides along the axes contains the most lattice points as $r \to \infty$?

\end{example}

\subsection*{Lattice points in the closed first quadrant} 
We consider also a similar problem concerning the number $\cN(r,s)$ of lattice points in the \emph{closed} first quadrant enclosed by the curve $r\Gamma(s)$. For $r>0$, define $\cS(r)$ to be the set of minimum points of the function $s \mapsto \cN(r,s)$. (Note the maximization problem has no solution, since one can enclose arbitrarily many points on the vertical axis by letting $s \to \infty$, or on the horizontal axis by letting $s \to 0$.) Under the same assumptions as \autoref{th:main1}, we show this minimizing set $\cS(r)$ converges to $\{ 1 \}$ as $r$ goes to infinity. 

To state this result precisely, let $\Z_+ = \{ 0, 1, 2, 3, \ldots \}$ and define a counting function  
\begin{align*}
\cN(r,s) &=\text{number of nonnegative-integer lattice points lying inside or on $r\Gamma(s)$ } \\
&=\# \big\{ (j,k)\in \Z_+ \times \Z_+:k\leq rsf(js/r) \big\}. 
\end{align*}
For each $r>0$, define the set
\[
\cS(r) = \argmin_{s >0} \cN(r,s) 
\]
consisting of the $s$-values that minimize the number of closed first-quadrant lattice points enclosed by the curve $r\Gamma(s)$.

\begin{theorem}[Optimal convex curve is asymptotically balanced]\label{th:main3}
Assume the hypotheses of \autoref{th:main1} hold and the intercepts of $\Gamma$ are equal ($L=M$). Then the optimal stretch factor for minimizing $\cN(r,s)$ approaches $1$ as $r$ tends to infinity, with
\[
\cS(r) \subset \big[ 1-O(r^{-e}),1+O(r^{-e}) \big] .
\]
Further, the minimal lattice count has asymptotic formula 
\[
\min_{s > 0} \cN(r,s) = r^2\area (\Gamma)+rL + O(r^{1-2e}) .
\]
\end{theorem}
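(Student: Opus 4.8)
The plan is to derive \autoref{th:main3} from \autoref{th:main1} by exploiting the elementary bijection between nonnegative-integer lattice points and positive-integer lattice points under a translated curve. Concretely, a nonnegative-integer lattice point $(j,k)$ with $k \le rsf(js/r)$ corresponds to a positive-integer lattice point $(j+1,k+1)$ lying under a curve that is the graph of $r\Gamma(s)$ shifted up and to the right by one unit in each coordinate. Equivalently, I want to relate $\cN(r,s)$ to $N$ evaluated on a slightly modified curve. The cleanest route is to observe the exact identity $\cN(r,s) = N(r,s) + (\text{points on the two axes}) + 1$ (for the origin), where the points on the horizontal axis number $\lfloor rs g(0/\cdots) \rfloor$-type quantities — more precisely, the number of $j \ge 1$ with $(j,0)$ enclosed is $\lfloor$ (the $x$-intercept of $r\Gamma(s)) \rfloor = \lfloor rs L/s \cdot (\text{scaling})\rfloor$. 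Since the $x$-intercept of $r\Gamma(s)$ is $rL/s$ and the $y$-intercept is $rMs$, with $L=M$ we get that the axis contributions are $\lfloor rL/s \rfloor + \lfloor rLs \rfloor + 1$.

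Thus $\cN(r,s) = N(r,s) + \lfloor rL/s \rfloor + \lfloor rLs \rfloor + 1$. The function $s \mapsto \lfloor rL/s \rfloor + \lfloor rLs \rfloor$ is, up to an error $O(1)$, equal to $rL(s + s^{-1})$, which is minimized at $s=1$ and is strictly convex there, growing like $rL(s-1)^2$ for $s$ near $1$. Meanwhile \autoref{th:main1}, or rather its proof, shows $N(r,s) = r^2\area(\Gamma) - rL + (\text{a term that is } O(r^{1-2e}) \text{ uniformly, and that, near } s=1, \text{behaves like } -cr(s-1)^2 + O(r^{1-2e}))$ — indeed the whole mechanism behind \autoref{th:main1} is that the $s$-dependent part of $N(r,s)$ has a nondegenerate maximum at $s=1$ on the scale $r^{-e}$. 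So in the sum $\cN(r,s) = N(r,s) + rL(s+s^{-1}) + O(1)$, the two $s$-dependent pieces compete: one is maximized at $s=1$, the other minimized at $s=1$. The plan is to show that, up to the relevant error scale, the \emph{sum} still has a minimum within $[1 - O(r^{-e}), 1 + O(r^{-e})]$.

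Rather than reprove the delicate estimates, I would instead run the same argument as in \autoref{sec:mainproof} (the proof of \autoref{th:main1}) with the roles adjusted: the key input is \autoref{th:asymptotic}, the precise lattice-count estimate for convex decreasing graphs, applied to $r\Gamma(s)$; combining it with the exact axis-count identity above gives an asymptotic expansion of $\cN(r,s)$ in which the main $s$-dependent term is $rL(s + s^{-1})$ plus a correction controlled by the curvature data, all with error $O(r^{1-2e})$. Since $rL(s+s^{-1}) = rL(2 + (s-1)^2 + O((s-1)^3))$ dominates any competing $s$-dependent contribution from the curved part of the boundary once $|s - 1| \gg r^{-e}$ — because that competing contribution is itself $O(r^{1-2e} + r|s-1|^{?})$ with a strictly smaller coefficient structure, exactly as quantified in the proof of \autoref{th:main1} — one concludes that any minimizer must satisfy $|s-1| = O(r^{-e})$, and plugging $s \in \cS(r)$ back in gives $\min_s \cN(r,s) = r^2 \area(\Gamma) + rL + O(r^{1-2e})$, the ``$+rL$'' arising as $-rL + 2rL$ from the two-term Weyl-type expansion plus the axis contribution.

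I expect the main obstacle to be bookkeeping the competition between the two $s$-dependent terms rigorously: one must check that the curvature-driven correction to $N(r,s)$ near $s=1$ genuinely has magnitude $o(r(s-1)^2)$ relative to the axis term $rL(s-1)^2$, or at worst a comparable but correctly-signed contribution, so that adding the axis term does not shift or flatten the extremum in a way that escapes the $r^{-e}$ window. In the original \autoref{th:main1} this competition is between the $r^2$-order area term (constant in $s$) and a lower-order $s$-dependent term; here both competing pieces are of order $r$ in a neighborhood of $s=1$, so the argument is genuinely a bit more delicate and one needs the sign of the dominant $s$-quadratic coefficient to come out right. The saving grace is that the axis term $rL(s+s^{-1})$ has an explicit, large, positive second derivative at $s=1$, and the perimeter-type correction from \autoref{th:asymptotic} contributes at the same order $r$ but the relevant quantity is still controlled; carefully, the net effect is that $\cN$ has a strict minimum near $s=1$, mirroring the strict maximum of $N$ there, which is exactly the assertion of \autoref{th:main3}.
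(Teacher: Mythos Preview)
Your axis-count identity $\cN(r,s)=N(r,s)+\lfloor rL/s\rfloor+\lfloor rLs\rfloor+1$ is exactly the right starting point, and it is what the paper uses. But two things go wrong after that.

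First, there is a genuine gap: you never establish an a~priori bound on $\cS(r)$. You plan to invoke \autoref{th:asymptotic}(b), but that estimate carries error terms involving $s^{\pm 3/2}$ and $s^{\pm 2}$ and is only stated for $rs^{-1}L\ge 1,\ rsM\ge 1$; it is useless until you know that any minimizer $s$ is bounded above and below independently of $r$. In the proof of \autoref{th:main1} this is handled by \autoref{prop:counting_positive}, an upper bound on $N$. For the minimization problem one needs the opposite inequality, a \emph{lower} bound of the shape $\cN(r,s)\ge r^2\area(\Gamma)+c\,rs$ (and symmetrically with $s^{-1}$), which combined with $\cN(r,1)\le r^2\area(\Gamma)+O(r)$ forces $s$ bounded. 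The paper supplies exactly this as \autoref{prop:counting_nonneg}. The crude axis bound $\cN(r,s)\ge \lfloor rLs\rfloor$ that you might be tempted to use gives only $s=O(r)$, which is not enough. Your ``run the same argument'' instruction does not transplant cleanly, because the analogue of \autoref{prop:counting_positive} is a new (easy, but not stated) ingredient.

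Second, the ``competition'' you worry about is a red herring, and framing the argument that way obscures why it works. Substituting the two-term asymptotic $N(r,s)=r^2\area(\Gamma)-\tfrac{r}{2}(s^{-1}+s)L+(\text{error})$ into your identity gives immediately
\[
\cN(r,s)=r^2\area(\Gamma)+\tfrac{r}{2}(s^{-1}+s)L+(\text{same error})+O(1),
\]
with the sign of the perimeter term simply flipped. There is no cancellation to check and no ``dominant $s$-quadratic coefficient'' whose sign is in doubt: the main $s$-dependent part of $\cN$ is $\tfrac{rL}{2}(s+s^{-1})$, minimized at $s=1$, and from here the argument is line-for-line parallel to \autoref{sec:mainproof} with inequalities reversed. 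Your sketch never actually performs this combination, and the paragraph about needing the curvature correction to be $o(r(s-1)^2)$ suggests you had not yet seen that the terms add rather than compete.
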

The theorem holds in particular when the second derivatives of $f$ and $g$ are positive and monotonic all the way up to the endpoints, thus yielding a corollary analogous to \autoref{co:main1}. Also, \autoref{th:main3} applies in particular when the curve $\Gamma$ is a $p$-ellipse with $0<p<1$, since we verified the hypotheses already in \autoref{ex:p-ellipse}. 

Concave curves, such as $p$-ellipses with $1<p<\infty$, were handled earlier by Laugesen and Liu \cite{LL16}. The standard ellipse case ($p=2$) was done first by van den Berg, Bucur, and Gittins \cite{BBG16}, who used it to show that the rectangle of given area maximizing the $n$-th Neumann eigenvalue of the Laplacian will converge to a square as $n \to \infty$.

\section{\bf Two-term upper bound on counting function}

In order to control the stretch factor when proving our main results later in the paper, we now develop a two-term upper bound on the lattice point counting function. The leading order term of the bound is simply the area inside the curve, and thus is best possible, while the second term scales like the length of the curve and so has the correct order of magnitude. 

Recall $\Gamma$ is the graph of $y=f(x)$, where $f$ is convex and strictly decreasing on $[0,L]$, with $f(0)=M, f(L)=0$. We do not assume $f$ is differentiable, in the next proposition. 
\begin{proposition}[Two-term upper bound on counting function]\label{prop:counting_positive} The number $N(r,s)$ of positive-integer lattice points lying inside $r\Gamma(s)$ in the first quadrant satisfies
\[
N(r,s)\leq r^2\area(\Gamma) - \frac{1}{2} f \big( \frac{L}{2} \big) rs
\]
whenever $r \geq 2s/L$.
\end{proposition}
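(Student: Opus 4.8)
\emph{Proof proposal.} The plan is to collapse the two–dimensional count onto the one–dimensional convex profile and then squeeze out a second–order gain from the midpoint (Hermite--Hadamard) inequality. First I would write $h(x)=rsf(sx/r)$ for the height function of $r\Gamma(s)$ on $[0,rL/s]$, extended by $h(x)=0$ for $x>rL/s$. Since $f$ is convex and strictly decreasing with $f(L)=0$, the extended $h$ is convex and non-increasing on $[0,\infty)$; the one point to check is that the left derivative of $h$ at the intercept is $\le 0$, which matches the right derivative $0$ of the zero extension --- and this is exactly where the standing convexity hypothesis on $\Gamma$ is used. Counting lattice points column by column then gives $N(r,s)=\sum_{j\in\N}\lfloor h(j)\rfloor\le\sum_{j\in\N}h(j)$, the terms with $j>rL/s$ contributing nothing.

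Next I would apply the Hermite--Hadamard inequality to the convex function $h$ on each unit interval centred at an integer, $h(j)\le\int_{j-1/2}^{j+1/2}h(x)\,\ud x$, and sum over $j\in\N$. Because the intervals $[j-\tfrac12,j+\tfrac12]$ tile $[\tfrac12,\infty)$, this yields
\[
N(r,s)\le\int_{1/2}^{\infty}h(x)\,\ud x=\int_0^{\infty}h(x)\,\ud x-\int_0^{1/2}h(x)\,\ud x .
\]
The substitution $u=sx/r$ identifies $\int_0^\infty h=\int_0^{rL/s}rsf(sx/r)\,\ud x=r^2\int_0^L f(u)\,\ud u=r^2\area(\Gamma)$, so the proposition reduces to the one–variable lower bound $\int_0^{1/2}h(x)\,\ud x\ge\tfrac12 f(L/2)\,rs$. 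Since $h$ is non-increasing this is immediate: $\int_0^{1/2}h\ge\tfrac12 h(\tfrac12)=\tfrac12\,rs\,f\!\big(\tfrac{s}{2r}\big)$, and the hypothesis $r\ge 2s/L$ forces $\tfrac{s}{2r}\le\tfrac L2$, whence $f(\tfrac{s}{2r})\ge f(\tfrac L2)$ by monotonicity. Combining the displays gives $N(r,s)\le r^2\area(\Gamma)-\tfrac12 f(L/2)\,rs$.

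I do not expect a real obstacle here: the argument uses no estimate on $f'$ or $f''$, consistent with the proposition not assuming differentiability. The only care needed --- more bookkeeping than difficulty --- is (i) justifying convexity of $h$ across the intercept so that Hermite--Hadamard applies on the unit intervals straddling $x=rL/s$, for which the zero extension above suffices, and (ii) ensuring the half–column of deficit is harvested near $x=0$, where the profile is tallest, which is precisely what the condition $r\ge 2s/L$ pays for (indeed $r\ge s/L$ already suffices, the factor $2$ being a harmless margin).
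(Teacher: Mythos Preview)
Your argument is correct and takes a genuinely different route from the paper's. The paper first reduces to $r=s=1$ with $L\ge 2$, packs unit squares (upper-right corners at the lattice points) under $\Gamma$, and then inserts the right triangles cut off by the left-tangent lines of $f$ at the integers; telescoping the triangle heights via the tangent-line form of convexity gives total triangle area $\ge\tfrac12\big(f(1)-f(L)\big)=\tfrac12 f(1)\ge\tfrac12 f(L/2)$. You instead keep general $r,s$, shift the column tiling by $\tfrac12$, and harvest the second-order term in one stroke via the Hermite--Hadamard midpoint inequality, with the entire deficit coming from the uncovered half-column $[0,\tfrac12]$. Both arguments are pure-convexity proofs requiring no differentiability; yours is shorter, works directly without the rescaling reduction, and avoids the tangent-line geometry, while the paper's picture makes the source of the gain visually explicit. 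Your closing remark that $r\ge s/L$ already suffices is also correct: in the paper's reduction this would correspond to weakening ``$L\ge 2$'' to ``$L\ge 1$'', and indeed your half-column estimate only needs $s/(2r)\le L/2$, whereas the paper's final step $f(1)\ge f(L/2)$ genuinely needs $L\ge 2$.
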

\begin{proof}
It is enough to prove the case $r=s=1$ for $L \geq 2$, because then the general case of the proposition follows by applying the special case to the curve $r\Gamma(s)$ (which has horizontal intercept $rs^{-1}L$ and defining curve $y=rsf(sx/r)$).

Clearly $N(1,1)$ equals the total area of the squares of sidelength $1$ having upper right vertices at positive integer lattice points inside the curve $\Gamma$. The union of these squares is contained in $\Gamma$, since the curve is decreasing.

\begin{figure}
\includegraphics[scale=0.4]{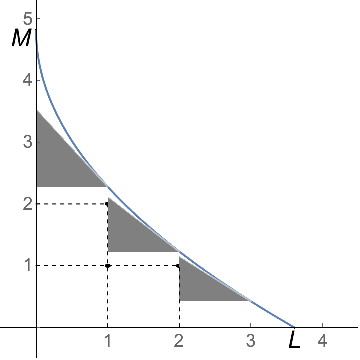}
\caption{\label{fig:counting_positive}Positive integer lattice count $N(1,1) \leq \area(\Gamma) - \area(\text{triangles})$, in proof of \autoref{prop:counting_positive}.}
\end{figure}

Consider the right triangles of width $1$ formed by left-tangent lines on $\Gamma$, as shown in \autoref{fig:counting_positive}. The triangles have vertices $\big( i-1,f(i) \big),\big( i,f(i) \big),\big( i-1,f(i)-f'(i-) \big)$, for $i=1,\ldots,\lfloor L \rfloor$. Clearly the triangles lie under the curve by concavity, and lie outside the union of squares. 

Hence
\[
N(1,1) \leq \area(\Gamma) - \area(\text{triangles}) .
\]
To complete the proof, we estimate as follows:
\begin{align*}
\area(\text{triangles}) 
& = \frac{1}{2} \sum_{i=1}^{\lfloor L \rfloor} |f'(i-)| \\
& \geq \bigg( \frac{1}{2} \sum_{i=1}^{\lfloor L \rfloor-1} \big( f(i)-f(i+1) \big) \bigg) + \frac{1}{2} \big( f(\lfloor L \rfloor) - f(L) \big) \qquad \text{by convexity} \\
& = \frac{1}{2} \big( f(1) - f(L) \big) \\
& \geq \frac{1}{2} f(L/2) 
\end{align*}
since $L/2 \geq 1$ and $f(L)=0$. 
\end{proof}

\section{\bf Two-term counting asymptotics with explicit remainder}
\label{sec:twoterm}

What matters in the following proposition is that the terms on the right side of the estimate in part (b) can be shown later to have order less than $O(r)$, and thus can be treated as remainder terms. Also, it matters that the $s$-dependence in the estimate can be seen explicitly.  

\begin{proposition}[Two-term counting estimate]\label{th:asymptotic}
Assume $(\alpha,\beta) \in \Gamma$ is a point in the first quadrant such that $f \in C^2[\alpha,L)$ with $f'<0$ and $f''>0$ on $[\alpha,L)$, and similarly $g\in C^2[\beta,M)$ with $g'<0$ and $g''>0$ on $[\beta,M)$. Further suppose there is a partition $\alpha=\alpha_0 < \alpha_1 < \ldots < \alpha_m = L$ such that $f''$ is monotonic over each subinterval $(\alpha_i,\alpha_{i+1})$, and a partition $\beta=\beta_0 < \beta_1 < \ldots < \beta_n = M$ such that $g''$ is monotonic over each subinterval $(\beta_i,\beta_{i+1})$.

\noindent (a) Assume the curve $\Gamma$ does not pass through any integer lattice points. Suppose $\alpha < \lfloor L \rfloor$ and $\beta < \lfloor M \rfloor$, and let $0<\delta<\lfloor L \rfloor - \alpha$ and $0 < \epsilon < \lfloor M \rfloor - \beta$.
Then the number $N$ of positive-integer lattice points inside $\Gamma$ in the first quadrant satisfies:
\begin{align*}
&\big|N-\area (\Gamma)+(L+M)/2\big|\notag\\
&\leq 6\Big(\int_\alpha^L f''(x)^{1/3} \ud x+\int_\beta^M g''(y)^{1/3}\ud y\Big)  +175\big(\frac{1}{f''(L-\delta)^{1/2}}+\frac{1}{g''(M-\epsilon)^{1/2}}\big)\notag\\
&+525\big(\sum_{i=0}^{m-1} \frac{1}{f''(\alpha_i)^{1/2}}+\sum_{i=0}^{n-1} \frac{1}{g''(\beta_i)^{1/2}}\big) +\frac{1}{4} \big( \sum_{i=0}^{m-1} |f'(\alpha_i)| + \sum_{i=0}^{n-1} |g'(\beta_i)| \big) \notag\\
& \hspace{4cm}+ \frac{1}{2}(\delta+\epsilon) + 3(m+n) + 5 + \frac{L}{M}+\frac{M}{L} . \notag\\
\end{align*}

\noindent (b) Suppose $\alpha < L/2$ and $\beta < M/2$, and let $\delta:(0,\infty) \to (0, L/2-\alpha)$ and $\epsilon:(0,\infty) \to (0, M/2-\beta)$ be functions.
The number $N(r,s)$ of positive-integer lattice points lying inside $r\Gamma(s)$ in the first quadrant satisfies (for all $r,s>0$ such that $rs^{-1}L \ge 1$ and $rsM \ge 1$):
\begin{align*}
&\big|N(r,s)-r^2\area(\Gamma)+r(s^{-1}L+sM)/2\big|\notag\\
& \leq 6r^{2/3}\Big(\int_\alpha^L f''(x)^{1/3} \ud x+\int_\beta^M g''(y)^{1/3}\ud y\Big)\\
& \quad +175r^{1/2}\big(\frac{s^{-3/2}}{f''(L-\delta(r))^{1/2}}+\frac{s^{3/2}}{g''(M-\epsilon(r))^{1/2}}\big)\notag\\
&\quad +525r^{1/2}\big(\sum_{i=0}^{m-1} \frac{s^{-3/2}}{f''(\alpha_i)^{1/2}}+\sum_{i=0}^{n-1} \frac{s^{3/2}}{g''(\beta_i)^{1/2}}\big)
+\frac{1}{4} \big( \sum_{i=0}^{m-1} s^2 |f'(\alpha_i)| + \sum_{i=0}^{n-1} s^{-2} |g'(\beta_i)| \big)\notag\\
&\quad + \frac{1}{2}r \big( s^{-1}\delta(r)+s\epsilon(r) \big)+3(m+n)+5 + s^{-2} \frac{L}{M} + s^2 \frac{M}{L} . \notag
\end{align*}
\end{proposition}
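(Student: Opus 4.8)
The plan is to prove part (a) by the complementation trick sketched in the introduction, and then to deduce part (b) from part (a) by rescaling. For part (a): a positive-integer lattice point lies strictly below $\Gamma$ exactly when it belongs to the box $Q=\{1,\dots,\lfloor L\rfloor\}\times\{1,\dots,\lfloor M\rfloor\}$ and does not lie strictly above $\Gamma$, the hypothesis that $\Gamma$ avoids $\Z^2$ being precisely what rules out the boundary case; hence $N=\lfloor L\rfloor\lfloor M\rfloor-P$, where $P$ counts the points of $Q$ lying strictly above $\Gamma$. Now apply the lattice-preserving reflection $(x,y)\mapsto(\lfloor L\rfloor+1-x,\lfloor M\rfloor+1-y)$, which fixes $Q$ set-wise and carries the points counted by $P$ onto the positive-integer lattice points lying strictly below the graph of the \emph{concave} strictly decreasing function
\[
h(u)=\lfloor M\rfloor+1-f(\lfloor L\rfloor+1-u),\qquad h^{-1}(v)=\lfloor L\rfloor+1-g(\lfloor M\rfloor+1-v).
\]
Since $h''=-f''<0$, and since the smoothness, sign, and piecewise-monotonicity hypotheses on $f$ near $L$ and on $g$ near $M$ pass verbatim to $h$ near one end and to $h^{-1}$ near the other (the crossover point $(\alpha,\beta)$ and the partition points $\alpha_i,\beta_i$ going to their reflections), the number $P$ is exactly the count to which the concave-curve estimate of Laugesen and Liu \cite{LL16}, built on Kr\"atzel \cite{kratzel04}, applies — once $h$ is first extended at both ends to a genuine concave curve running from one coordinate axis to the other, which can be done so as to introduce no new positive-integer lattice points and to keep the two intercepts within $O(1)$ of $\lfloor L\rfloor+1$ and $\lfloor M\rfloor+1$.

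That estimate expresses $P$ as (the area under the extended curve) minus half the sum of its intercepts, plus a remainder $R$ of exactly the ``$\int|h''|^{1/3}$ plus endpoint terms'' form displayed in the proposition, written in terms of $h$ and $h^{-1}$. The substitution $x=\lfloor L\rfloor+1-u$ turns $\int|h''|^{1/3}$ and $\int|(h^{-1})''|^{1/3}$ (over the two sides of the crossover) back into $\int_\alpha^L (f'')^{1/3}$ and $\int_\beta^M (g'')^{1/3}$, and turns the values of $h',h''$ at the reflected partition points into those of $f',f''$ at $\alpha_i$ and of $g',g''$ at $\beta_i$; one checks that the area under the extended curve is $(\lfloor L\rfloor+1)(\lfloor M\rfloor+1)-\area(\Gamma)+O(1)$ and that half the sum of its intercepts is $\tfrac12(\lfloor L\rfloor+\lfloor M\rfloor)+O(1)$. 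Since
\[
\lfloor L\rfloor\lfloor M\rfloor-(\lfloor L\rfloor+1)(\lfloor M\rfloor+1)+\tfrac12(\lfloor L\rfloor+\lfloor M\rfloor)=-\tfrac12(\lfloor L\rfloor+\lfloor M\rfloor)-1=-\tfrac12(L+M)+O(1),
\]
we obtain $N=\lfloor L\rfloor\lfloor M\rfloor-P=\area(\Gamma)-\tfrac12(L+M)-R+O(1)$, that is, $|N-\area(\Gamma)+\tfrac12(L+M)|\le|R|+O(1)$, which is the asserted inequality once the bounded terms are absorbed into the additive constants.

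The substance lies entirely in this conversion, and that is where I expect the main difficulty. The delicate point is handling the extended curve near the coordinate axes — equivalently, handling $\Gamma$ near $x=L$ and near $y=M$, where $f$ and $g$ may be arbitrarily steep, even with infinite one-sided derivatives: this is the purpose of the auxiliary functions $\delta,\epsilon$, which allow one to estimate accurately on $[\alpha,L-\delta]$ and $[\beta,M-\epsilon]$ and to discard the thin end-regions at the cost of the terms $\tfrac12(\delta+\epsilon)$ and $175\,(f''(L-\delta)^{-1/2}+g''(M-\epsilon)^{-1/2})$. One must also verify that each discrepancy between $L,M$ and $\lfloor L\rfloor,\lfloor M\rfloor$, between the area under the extended curve and $\area(\Gamma)$, and at the several endpoints of the two orientations, is controlled by quantities actually appearing on the right-hand side — which produces the crude terms $3(m+n)+5+L/M+M/L$ and the sums $\tfrac14\sum|f'(\alpha_i)|$, $\tfrac14\sum|g'(\beta_i)|$ — rather than by something of order $L$ or $M$; and that the numerical constants $6,175,525$ transfer as claimed from \cite{LL16}.

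Part (b) follows by applying part (a) to the dilated curve $r\Gamma(s)$, which is the graph of $F(x)=rsf(sx/r)$ on $[0,rs^{-1}L]$ with intercepts $L'=rs^{-1}L$ and $M'=rsM$. The conditions $rs^{-1}L\ge1$ and $rsM\ge1$ make $L',M'\ge1$, whence $\alpha':=rs^{-1}\alpha<\tfrac12L'<\lfloor L'\rfloor$ and $\beta':=rs\beta<\lfloor M'\rfloor$, so (a) applies to $F$ with the scaled data $\alpha_i':=rs^{-1}\alpha_i$, $\beta_i':=rs\beta_i$, $\delta':=rs^{-1}\delta(r)$, $\epsilon':=rs\epsilon(r)$. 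Because $F''(x)=(s^3/r)f''(sx/r)$, the substitution $t=sx/r$ makes every term of the estimate in (a) coincide with the corresponding term of (b): the area under $F$ is $r^2\area(\Gamma)$; $\tfrac12(L'+M')=\tfrac12r(s^{-1}L+sM)$; $\int_{\alpha'}^{L'}(F'')^{1/3}=r^{2/3}\int_\alpha^L (f'')^{1/3}$, and similarly for $g$; $F''(L'-\delta')^{-1/2}=r^{1/2}s^{-3/2}f''(L-\delta(r))^{-1/2}$ and $F''(\alpha_i')^{-1/2}=r^{1/2}s^{-3/2}f''(\alpha_i)^{-1/2}$; $|F'(\alpha_i')|=s^2|f'(\alpha_i)|$; $L'/M'=s^{-2}L/M$; and $m,n$ are unchanged. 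The one extra point is that (a) assumed $\Gamma$ meets no lattice point whereas (b) does not: remove this by applying (a) to $(r\pm\eta)\Gamma(s)$ for small $\eta>0$ outside the countable set of radii at which the dilated curve passes through a lattice point, sandwiching $N(r,s)$ by monotonicity of the count under dilation and letting $\eta\to0$ by continuity of the area term. That step is routine, so the entire difficulty is in part (a).
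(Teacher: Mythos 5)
Your overall strategy --- complement inside a lattice-aligned box, reflect to obtain a concave decreasing curve, and invoke the concave-curve estimate (Proposition~8 of \cite{LL16}) --- is the same as the paper's, and your part (b) reduction (scaling, then a limiting argument for radii at which the curve meets lattice points) also matches. The genuine gap is the extension step in part (a). Your reflection about $(\lfloor L\rfloor+1,\lfloor M\rfloor+1)$ produces a concave arc $h$ whose endpoints are the images of the intercepts $(L,0)$ and $(0,M)$ of $\Gamma$, and under the stated hypotheses $f$ is only $C^2$ on $[\alpha,L)$: the limit $f''(L-)$ may be $0$ or $+\infty$ (for the $p$-circle one has $f''(1-)=+\infty$ when $p<1/2$ and $f''(1-)=0$ when $1/2<p<1$), and likewise for $g''(M-)$. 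Hence no concave extension of $h$ to the axes can be $C^2$ with strictly negative second derivative across these junction points, and the junctions are \emph{interior} points of your extended curve, not its intercepts, so the $\delta,\epsilon$ allowances in the Laugesen--Liu estimate (which soften the behaviour only at the curve's own endpoints) do not cover them. In particular the terms $175\,f''(L-\delta)^{-1/2}$ and $175\,g''(M-\epsilon)^{-1/2}$ cannot be extracted by ``discarding thin end-regions'' of the extended curve: the hypothesis of the concave estimate already fails before any discarding can take place. (A smaller inaccuracy: the new intercepts are within $L/M$ and $M/L$, not an absolute $O(1)$, of $\lfloor L\rfloor+1$ and $\lfloor M\rfloor+1$; this is harmless since those ratios appear in the bound, but the smoothness obstruction is not.)

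The paper sidesteps exactly this by reflecting about $(\lfloor L\rfloor,\lfloor M\rfloor)$ instead: the complementary curve $\widetilde{\Gamma}$ is the portion of $\Gamma$ lying in the rectangle $[0,\lfloor L\rfloor]\times[0,\lfloor M\rfloor]$, so its intercepts correspond to the interior points $\big(\lfloor L\rfloor, f(\lfloor L\rfloor)\big)$ and $\big(g(\lfloor M\rfloor),\lfloor M\rfloor\big)$ of $\Gamma$, where $f$ and $g$ are genuinely $C^2$ with positive second derivative --- this is precisely the role of the hypotheses $\alpha<\lfloor L\rfloor$ and $\delta<\lfloor L\rfloor-\alpha$. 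The degenerate ends of $\Gamma$ near $(L,0)$ and $(0,M)$ never enter the concave estimate at all: they are cut off into two small corner regions $U_L,U_M$ bounded by elementary triangles, which, together with the lattice points on the two boundary columns of the rectangle, produce the $L/M+M/L$ and additive-constant terms; the quantities $f''(L-\delta)$ and $g''(M-\epsilon)$ then enter only through bounding $|F''(\delta)|^{-1/2}$ and $|G''(\epsilon)|^{-1/2}$ via the monotonicity of $f''$ and $g''$ (which is also how $350$ becomes $525$). To repair your argument you would either have to truncate rather than extend --- which is the paper's proof --- or reprove the concave estimate for curves that are only piecewise $C^2$ with possible curvature degeneration at interior points.
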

Notice the integral of $(f'')^{1/3}$ in the Proposition is finite, because it is bounded by a constant times 
\[
\big( \int_\alpha^L f''(x) \ud x \big)^{1/3} = \big( f'(L-) - f'(\alpha) \big)^{1/3} \leq \big( - f'(\alpha) \big)^{1/3} < \infty.
\]
The integral of $(g'')^{1/3}$ is similarly finite. 
\begin{proof}
Part (a). In what follows, remember $L$ and $M$ are not integers since $\Gamma$ is assumed not to pass through any integer lattice points. 

The idea is to count lattice points in the ``complementary region'' lying above the convex curve $\Gamma$ and inside the rectangle $\big[ 0,\lfloor L \rfloor \big] \times \big[ 0, \lfloor M \rfloor \big]$, because then one may invoke known estimates for a region with concave boundary, \emph{e.g.} \cite[Proposition~8]{LL16}. The complementary region is shown in \autoref{fig:gammafigcomp}. 
\begin{figure}
\includegraphics[scale=0.40]{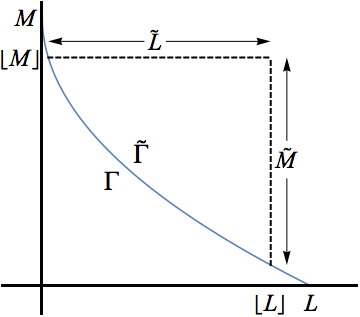}
\caption{\label{fig:gammafigcomp}The convex decreasing curve $\Gamma$, and its complementary curve $\widetilde{\Gamma}$, which is concave decreasing with respect to an origin at the point $\big( \lfloor L \rfloor , \lfloor M \rfloor \big)$.}
\end{figure}
Its width and height are
\[
	\widetilde{L} = \lfloor L \rfloor - g(\lfloor M \rfloor), \qquad \widetilde{M} = \lfloor M \rfloor - f(\lfloor L \rfloor) ,
\]
and we define strictly decreasing functions $F:[0, \widetilde{L}] \to [0, \widetilde{M}]$ and $G:[0, \widetilde{M}] \to [0,\widetilde{L}]$ by
\[
F(x) = \lfloor M \rfloor - f \big( \lfloor L \rfloor -x \big) , \qquad G(y) = \lfloor L \rfloor - g \big( \lfloor M \rfloor -y \big) .
\]
Notice $F$ and $G$ are inverses, with $y=F(x)$ if and only if $x=G(y)$. 

Write $\widetilde{\Gamma}$ for the graph of $F$ (or $G$), so that $\widetilde{\Gamma}$ decreases from its $y$-intercept at $(0,\widetilde{M})$ to its $x$-intercept at $(\widetilde{L},0)$. 
Define $\widetilde{\alpha} = \lfloor L \rfloor - \alpha$ and $\widetilde{\beta} = \lfloor M \rfloor - \beta$.
Then $\widetilde{\alpha} > 0$ because we assumed $\alpha < \lfloor L \rfloor$. Applying $f$ to both sides of this inequality gives $\beta > f(\lfloor L \rfloor) = \lfloor M \rfloor - \widetilde{M}$, and so $\widetilde{\beta} < \widetilde{M}$. Similarly, we find $\widetilde{\beta} > 0$ and $\widetilde{\alpha} < \widetilde{L}$. Also, $0<\delta<\widetilde{\alpha}$ and $0<\epsilon<\widetilde{\beta}$ by the hypotheses in Part (a).

Note $(\widetilde{\alpha} ,\widetilde{\beta}) \in \widetilde{\Gamma}$ with $F(\widetilde{\alpha})=\widetilde{\beta}$.  Clearly $F \in C^2[0,\widetilde{\alpha}]$ with $F'<0$ and $F''<0$ on $[0,\widetilde{\alpha}]$, and similarly $G \in C^2[0,\widetilde{\beta}]$ with $G'<0$ and $G''<0$ on $[0,\widetilde{\beta}]$. Further, there is a partition $0 = \widetilde{\alpha}_0 < \widetilde{\alpha}_1 < \ldots < \widetilde{\alpha}_l = \widetilde{\alpha}$ such that $F''$ is monotonic on each subinterval $(\widetilde{\alpha}_i, \widetilde{\alpha}_{i+1})$.
This partition may be chosen so that $l \le m$ and $\widetilde{\alpha}_i = \lfloor L \rfloor - \alpha_{l-i}$ for $i=1,2,\ldots, l$. Likewise, there is a partition $0 = \widetilde{\beta}_0 < \widetilde{\beta}_1 < \ldots < \widetilde{\beta}_\ell = \widetilde{\beta}$ such that $G''$ is monotonic on each subinterval $(\widetilde{\beta}_i, \widetilde{\beta}_{i+1})$. This partition may be chosen so that $\ell \le n$ and $\widetilde{\beta}_i = \lfloor M \rfloor - \beta_{\ell-i}$ for $i=1,2,\ldots, \ell$.

Let $\widetilde{N}$ be the number of positive-integer lattice points bounded by $\widetilde{\Gamma}$.
Then by \cite[Proposition~8(a)]{LL16} applied to the concave decreasing curve $\widetilde{\Gamma}$, we have
\begin{align*}
&\big|\widetilde{N}-\area (\widetilde{\Gamma})+(\widetilde{L}+\widetilde{M})/2\big|\notag\\
&\leq 6\Big(\int_0^{\widetilde{\alpha}} |F''(x)|^{1/3} \ud x+\int_0^{\widetilde{\beta}} |G''(y)|^{1/3}\ud y\Big)  +175\big(\frac{1}{|F''(\delta)|^{1/2}}+\frac{1}{|G''(\epsilon)|^{1/2}}\big)\notag\\
&+350\big(\sum_{i=1}^l \frac{1}{|F''(\widetilde{\alpha_i})|^{1/2}}+\sum_{i=1}^\ell \frac{1}{|G''(\widetilde{\beta_i})|^{1/2}}\big) +\frac{1}{4} \big( \sum_{i=1}^l |F'(\widetilde{\alpha_i})| + \sum_{i=1}^\ell |G'(\widetilde{\beta_i})| \big) \notag\\
& \hspace{4cm}+(\delta+\epsilon)/2 + 3(l + \ell) + 1 . \notag\\
\end{align*}

Counting positive-integer lattice points in the rectangle $\big[ 0,\lfloor L \rfloor \big] \times \big[ 0, \lfloor M \rfloor \big]$ gives
\[
\lfloor L \rfloor \, \lfloor M \rfloor = N + \widetilde{N} + \lfloor \widetilde{L} \rfloor + \lfloor \widetilde{M} \rfloor - 1 .
\]
(Both $N$ and $\widetilde{N}$ include in their count any positive-integer lattice points lying on the curve $\Gamma$. Such double-counting is avoided, though, because the curve is assumed to contain no such lattice points.) The area of the rectangle can be decomposed as
\[
\lfloor L \rfloor \, \lfloor M \rfloor =  \area(\Gamma) + \area(\widetilde{\Gamma}) - \area(U_L) - \area(U_M) 
\]
where $U_L$ is the region bounded by the curve $\Gamma$, the $x$-axis, and the line $x=\lfloor L \rfloor$, and $U_M$ is the region bounded by $\Gamma$, the $y$-axis and the line $y=\lfloor M \rfloor$. After equating the last two displayed equations, we conclude
\begin{align*}
& \Big| N - \area(\Gamma)+(L+M)/2 + \widetilde{N} - \area(\widetilde{\Gamma})+(\widetilde{L}+\widetilde{M})/2 \Big| \\
& \leq \Big| \frac{L+M}{2} + \frac{\widetilde{L}+\widetilde{M}}{2} - \lfloor \widetilde{L} \rfloor - \lfloor \widetilde{M} \rfloor \Big| + 1 + \area(U_L) + \area(U_M) \\
& \leq \Big| \frac{\lfloor L \rfloor - \widetilde{L}}{2} + \frac{\lfloor M \rfloor - \widetilde{M}}{2} \Big| + 4 + \area(U_L) + \area(U_M) .
\end{align*}

By convexity, $U_L$ is contained in a right triangle of width $L-\lfloor L \rfloor \le 1$ and height $f(\lfloor L \rfloor) \le M/L$. Similarly, $U_M$ is contained in a right triangle of height $M-\lfloor M \rfloor \le 1$ and width $g( \lfloor M \rfloor) \le L/M$. Hence,
\[
	\area(U_L) + \area(U_M) \le \frac{1}{2} \bigg( \frac{L}{M} + \frac{M}{L} \bigg) .
\]
Also $\lfloor L \rfloor - \widetilde{L} = g( \lfloor M \rfloor) \le L/M$ and $\lfloor M \rfloor - \widetilde{M} = f( \lfloor L \rfloor) \le M/L$. Combining these results, we conclude 
\begin{align*}
\Big| N - \area(\Gamma)+(L+M)/2 \Big| \leq \Big| \widetilde{N} - \area(\widetilde{\Gamma})+(\widetilde{L}+\widetilde{M})/2 \Big| + 4 + \frac{L}{M} + \frac{M}{L} .
\end{align*}

To complete the proof from the above estimates, note that
\begin{align*}
	\int_0^{\widetilde{\alpha}} |F''(x)|^{1/3} \ud x+\int_0^{\widetilde{\beta}} |G''(y)|^{1/3}\ud y 
	& \le \int_\alpha^L f''(x)^{1/3} \ud x+\int_\beta^M g''(y)^{1/3}\ud y \\
	\sum_{i=1}^l \frac{1}{|F''(\widetilde{\alpha}_i)|^{1/2}}+ \sum_{i=1}^\ell \frac{1}{|G''(\widetilde{\beta}_i)|^{1/2}} 
	& \le \sum_{i=0}^{m-1} \frac{1}{f''(\alpha_i)^{1/2}}+\sum_{i=0}^{n-1} \frac{1}{g''(\beta_i)^{1/2}} \\
	\sum_{i=1}^l |F'(\widetilde{\alpha}_i)|+ \sum_{i=1}^\ell |G'(\widetilde{\beta}_i)| 
	& \le \sum_{i=0}^{m-1} |f'(\alpha_i)| + \sum_{i=0}^{n-1} |g'(\beta_i)| 
\end{align*}
and
\begin{align*}
	& \quad \ \frac{1}{|F''(\delta)|^{1/2}}+\frac{1}{|G''(\epsilon)|^{1/2}} \\
	& \le \frac{1}{f''(L-\delta)^{1/2}} + \frac{1}{g''(M-\epsilon)^{1/2}} 
	+\sum_{i=0}^{m-1} \frac{1}{f''(\alpha_i)^{1/2}}+\sum_{i=0}^{n-1} \frac{1}{g''(\beta_i)^{1/2}} ,
\end{align*}
where the final inequality relies on the monotonicity assumptions on $f$ and $g$. 

\medskip
Part (b).
Apply Part (a) to the curve $r\Gamma(s)$ by replacing $L, M, f(x), g(y), \alpha, \beta,\delta,\epsilon$ with $rs^{-1}L, rsM, rsf(sx/r), rs^{-1}g(s^{-1}y/r), rs^{-1}\alpha, rs\beta,rs^{-1}\delta(r),rs\epsilon(r)$ respectively; we check the needed hypotheses for Part (a) as follows. The hypothesis ``$\alpha < \lfloor L \rfloor$'' is satisfied because 
\[
rs^{-1}\alpha < rs^{-1}L/2 \leq \lfloor rs^{-1}L \rfloor ,
\]
where we used the assumption $\alpha \leq L/2$ and the fact that $t/2 < \lfloor t \rfloor$ when $t \geq 1$. Similarly, the hypothesis ``$\delta + \alpha < \lfloor L \rfloor$'' in Part (a) is satisfied because
\[
rs^{-1}\delta(r) + rs^{-1}\alpha < rs^{-1}L/2 \leq \lfloor rs^{-1}L \rfloor .
\]
Hence from Part (a) we obtain the conclusion of Part (b) provided the curve $r\Gamma(s)$ does not pass through any integer lattice points. 

If the curve does pass through some lattice points, then simply consider a decreasing sequence $r_i \searrow r$ for which each curve $r_i \Gamma(s)$ contains no lattice points, and also modify the functions $\delta(\cdot)$ and $\epsilon(\cdot)$ to be continuous at $r$. Then the desired result follows by passing to the limit in the case of the theorem already proved, noting that $N(r,s) \leq N(r_i,s)$. 
\end{proof}

\section{\bf Elementary bounds on the optimal stretch factors}
\label{sec:rdepend}

We develop some $r$-dependent bounds on the optimal stretch factors. Later, in the proof of \autoref{th:main1}, we will show the stretch factors in fact converge to $1$.  
\begin{lemma}[$r$-dependent bound on optimal stretch factors]\label{lemma:bound}
If 
\[
r^2 \geq \frac{1}{\displaystyle \max_\Gamma xy}
\]
then
\[
S(r)\subset \big[(rM)^{-1}, rL\big] .
\]
\end{lemma}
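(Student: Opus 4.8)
The plan is to show that the counting function $N(r,s)$ vanishes identically for every $s$ outside the interval $\big[(rM)^{-1},rL\big]$, and separately that the hypothesis $r^2 \ge 1/\max_\Gamma xy$ forces $\max_{s>0} N(r,s) \ge 1$. Combining these gives the claim, since any $s$ with $N(r,s)=0$ cannot be a maximizer.

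First I would handle the extreme stretch factors. If $s>rL$, then the curve $r\Gamma(s)$, being the graph of $y=rsf(sx/r)$, lives over the interval $0\le x\le rL/s<1$; hence it lies entirely in the vertical strip $\{0\le x<1\}$ and can enclose no lattice point $(j,k)$ with $j\ge 1$, so $N(r,s)=0$. Symmetrically, if $s<(rM)^{-1}$ then $rsM<1$, and since $f$ is decreasing we get $rsf(t)\le rsf(0)=rsM<1$ for all $t\in[0,L]$; thus $r\Gamma(s)$ lies entirely in the horizontal strip $\{0\le y<1\}$ and encloses no lattice point with $k\ge 1$, so again $N(r,s)=0$.

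Next I would exhibit one stretch factor with positive count. Let $(x_0,y_0)\in\Gamma$ be a point at which $xy$ attains its maximum over $\Gamma$, so that $x_0 y_0=\max_\Gamma xy$ and $y_0=f(x_0)$; note $0<x_0<L$ because $xy=0$ at both intercepts of $\Gamma$, so this maximum is positive and is attained at an interior point. Choose $s_0=rx_0$. Then $s_0/r=x_0$, so the height of $r\Gamma(s_0)$ above the point $x=1$ equals $rs_0 f(s_0/r)=r^2 x_0 f(x_0)=r^2 x_0 y_0\ge 1$ by hypothesis. Hence the lattice point $(1,1)$ lies on or below $r\Gamma(s_0)$, which gives $N(r,s_0)\ge 1$ and therefore $\max_{s>0}N(r,s)\ge 1$. (As a sanity check, $s_0=rx_0<rL$ and $r^2 x_0 M>r^2 x_0 y_0\ge 1$ shows $s_0>(rM)^{-1}$, so indeed $s_0$ already lies in the asserted interval.)

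Putting the pieces together: for any $s\notin\big[(rM)^{-1},rL\big]$ we have $N(r,s)=0<1\le\max_{s>0}N(r,s)$, so such $s$ is not in $S(r)$; hence $S(r)\subset\big[(rM)^{-1},rL\big]$. I do not anticipate a genuine obstacle here; the only point that needs a word of justification is that $\max_\Gamma xy$ is attained at an interior point of $\Gamma$, which follows at once from compactness of $\Gamma$ and the vanishing of $xy$ at the two intercepts.
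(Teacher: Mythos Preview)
Your proof is correct and follows essentially the same approach as the paper: show $N(r,s)=0$ whenever $s>rL$ or $s<(rM)^{-1}$ by examining the intercepts of $r\Gamma(s)$, and exhibit the stretch factor $s_0=rx_0$ (with $(x_0,y_0)$ maximizing $xy$ on $\Gamma$) for which $(1,1)$ lies under the curve, so that the maximum count is positive. Your added remarks---that the maximum of $xy$ is attained at an interior point of $\Gamma$ by compactness, and that $s_0$ already lies in the asserted interval---are correct and slightly sharpen the exposition.
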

\begin{proof}
Fix $r$, then let $(x_0,y_0) \in \Gamma$ be a point maximizing the product $xy$, and choose $s_0 = rx_0$. Then the curve $r\Gamma(s_0)$ passes through the point
\[
	\big(1, rs_0 f(s_0/r) \big) = (1, r^2 x_0 y_0) .
\]
By assumption $r^2 \geq 1/x_0 y_0$, and so the curve $r\Gamma(s_0)$ encloses the  point $(1,1)$. Hence the maximum of the counting function $s \mapsto N(r,s)$ is greater than zero. We will use that fact to constrain the $s$-values where the maximum can be attained. 

The curve $r\Gamma(s)$ has $x$-intercept at $rs^{-1}L$, which is less than $1$ if $s>rL$ and so in that case the curve encloses no positive-integer lattice points. Similarly if $s<(rM)^{-1}$, then $r\Gamma(s)$ has height less than $1$ and contains no lattice points in the first quadrant. The integer-valued function $s \mapsto N(r,s)$ is clearly bounded, and we saw in the first part of the proof that it is positive for some choice of $s_0$. Thus $N(r,s)$ attains its positive maximum at some $s$-value between $(rM)^{-1}$ and $rL$.
\end{proof}

\begin{lemma}[Improved $r$-dependent bound on optimal stretch factors]\label{lemma:bound2}
A constant $C$ exists, depending only on the curve $\Gamma$, such that if $r \geq C$ then
\[
	S(r) \subset \big[2(rM)^{-1}, \frac{1}{2}rL \big] .
\]
\end{lemma}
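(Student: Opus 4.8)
The plan is to combine an elementary lower bound on $\max_{s>0}N(r,s)$ with two ``single-strip'' upper bounds showing that stretch factors near the ends of the interval $\big[(rM)^{-1},rL\big]$ supplied by \autoref{lemma:bound} cannot be optimal once $r$ is large. Notably this needs neither \autoref{prop:counting_positive} nor the smoothness hypotheses; only convexity, monotonicity, and the elementary structure of the lattice count enter. Set
\[
c_0=\int_0^{L/2}\!\big(f(x)-f(L/2)\big)\ud x,\qquad c_1=\int_0^{M/2}\!\big(g(y)-g(M/2)\big)\ud y,
\]
which are positive constants depending only on $\Gamma$ (since $f$ and $g$ are strictly decreasing), and which are both strictly smaller than $\area(\Gamma)$. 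Write $c=\min(c_0,c_1)$.

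First I would establish the lower bound $\max_{s>0}N(r,s)\ge r^2\area(\Gamma)-2r\sqrt{LM}$ for all large $r$, by testing the ``balanced'' value $s_1=\sqrt{L/M}$. The curve $r\Gamma(s_1)$ then has equal width and height $a:=r\sqrt{LM}$, and writing it as $y=h(x)$ one has $N(r,s_1)=\sum_{j=1}^{\lfloor a\rfloor}\lfloor h(j)\rfloor$. Using $\lfloor h(j)\rfloor\ge h(j)-1$, the monotonicity estimate $h(j)\ge\int_j^{j+1}h$, and $\int_0^1 h\le h(0)=a$, a one-line Riemann-sum computation gives $N(r,s_1)\ge\int_0^a h-2a=r^2\area(\Gamma)-2r\sqrt{LM}$ as soon as $a\ge 1$.

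Next come the two single-strip upper bounds, which carry the crucial savings of order $r^2$. If $s>\tfrac12 rL$, then the $x$-intercept $rs^{-1}L$ of $r\Gamma(s)$ is less than $2$, so only the column $j=1$ can hold lattice points, whence $N(r,s)\le rsf(s/r)=r^2\,u f(u)$ with $u:=s/r>L/2$ (and if $u\ge L$ then $f(u)=0$, $N(r,s)=0$, so we are done \emph{a fortiori}). The rectangle $[0,u]\times[0,f(u)]$ lies under $\Gamma$, and since $f(u)\le f(L/2)$ one gets $\area(\Gamma)-uf(u)\ge\int_0^{u}\!\big(f(x)-f(u)\big)\ud x\ge\int_0^{L/2}\!\big(f(x)-f(u)\big)\ud x\ge c_0$, so $N(r,s)\le r^2\big(\area(\Gamma)-c_0\big)$. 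The mirror-image argument applied to the inverse function $g$ shows that if $s<2(rM)^{-1}$ then $r\Gamma(s)$ has height $rsM<2$, only the row $k=1$ matters, and the same reasoning yields $N(r,s)\le r^2\big(\area(\Gamma)-c_1\big)$. Lattice points lying on $\Gamma$ cause no difficulty here, since the floor function accounts for them correctly.

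Finally I would choose $C$, depending only on $\Gamma$, so large that $r\ge C$ forces $r^2\ge 1/\max_\Gamma xy$ (so \autoref{lemma:bound} applies), $r^2\ge 4/(LM)$ (so that $2(rM)^{-1}\le\tfrac12 rL$ and the three regimes tile the interval), $a\ge 1$, and $r^2 c>2r\sqrt{LM}$. For such $r$ and any $s\in S(r)$ we have $s\in\big[(rM)^{-1},rL\big]$ by \autoref{lemma:bound}, while the strip bounds together with $\max_{s'>0}N(r,s')\ge r^2\area(\Gamma)-2r\sqrt{LM}>r^2\big(\area(\Gamma)-c\big)$ rule out both $s>\tfrac12 rL$ and $s<2(rM)^{-1}$; hence $S(r)\subset\big[2(rM)^{-1},\tfrac12 rL\big]$. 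I expect no genuine obstacle: the lower bound is standard and the strip bounds are elementary; the only care required is bookkeeping the degenerate cases ($N=0$, or $\Gamma$ passing through lattice points) and verifying that every constant depends on $\Gamma$ alone.
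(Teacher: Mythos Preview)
Your argument is correct and takes a genuinely different route from the paper. The paper proves the lemma by a \emph{local} comparison: for $s\in(\tfrac12 rL,rL]$ it shows directly that $N(r,s/2)>N(r,s)$, by noting that the curve $r\Gamma(s/2)$ has at least two columns ($j=1,2$) of lattice points while $r\Gamma(s)$ has only one, and exploiting the gap $\delta_1=\min_{L/2\le x\le L}\big(f(x/2)-f(x)\big)>0$ to show the two-column count beats the one-column count once $r$ is large. Your approach is \emph{global}: you produce a lower bound $\max_s N(r,s)\ge r^2\area(\Gamma)-O(r)$ by testing a balanced stretch, and a strip upper bound $N(r,s)\le r^2\big(\area(\Gamma)-c\big)$ for extreme $s$ that exhibits a savings of order $r^2$, then compare. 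Both arguments use only strict monotonicity and continuity of $f,g$ (convexity plays no essential role in either), and both avoid the smoothness hypotheses and \autoref{prop:counting_positive}. The paper's method is slightly more self-contained (no Riemann-sum lower bound needed), while yours makes the quantitative mechanism---the ``dead area'' $c_0$ that a thin strip misses---more transparent, and in fact renders the preliminary appeal to \autoref{lemma:bound} unnecessary, since your strip bound already covers all $s>\tfrac12 rL$ (including $s\ge rL$, where $N=0$).
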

\begin{proof}
%
Let $C=\max \big( \sqrt{8/L \delta_1},\sqrt{8/M \delta_2} \, \big)$ where 
\begin{align*}
	\delta_1 & = \min \Big\{ f(x/2)-f(x) : L/2 \le x \le L \Big \} , \\
	\delta_2 & = \min \Big\{ g(y/2)-g(y) : M/2 \le x \le M \Big \} .
\end{align*}
Choosing $x=L$ implies 
\[
(L/2) \delta_1 \leq (L/2) f(L/2) \leq \max_\Gamma xy ,
\]
and so $C^2 \geq 4/\max_\Gamma xy$. 

Fix $r \geq C$. Then $S(r)\subset \big[(rM)^{-1}, rL\big]$ by \autoref{lemma:bound}. To show $S(r)$ is contained in a smaller interval, we will show $s \notin S(r)$ when $s \in ( \frac{1}{2} rL,rL]$. So suppose in what follows that
\[
\frac{L}{2} < \frac{s}{r} \leq L . 
\]
We will prove $N(r,s) < N(r,s/2)$, which implies $s$ is not a maximizer for the counting function and so $s \notin S(r)$. 

By counting lattice points $(j,k)$ with $j=1$ and $j=2$, we find
\begin{align*}
N(r,s/2) 
& \geq \lfloor (rs/2) f(s/2r) \rfloor + \lfloor (rs/2) f(2s/2r) \rfloor \\
& > (rs/2) f(s/2r) +  (rs/2) f(s/r) - 2 \\
& \geq (rs/2) \delta_1 + rsf(s/r) - 2 \qquad \text{by taking $x=s/r$ in the definition of $\delta_1$} \\
& > rs f(s/r) \geq \lfloor rs f(s/r) \rfloor 
\end{align*}
since 
\[
(rs/2) \delta_1 > \frac{1}{4} r^2 L \delta_1 \geq \frac{1}{4} C^2 L \delta_1 \geq 2 .
\]
Also, counting lattice points $(j,k)$ with $j=1$ shows that $\lfloor rs f(s/r) \rfloor = N(r,s)$ (lattice points with $j \geq 2$ cannot lie beneath the curve $r \Gamma(s)$ because $2s/r>L$). We conclude $N(r,s/2) > N(r,s)$, as we wished to show. 

An analogous argument proves that $s \notin S(r)$ when $s \in [(rM)^{-1},2(rM)^{-1})$, that is, when $s^{-1} \in ( \frac{1}{2} rM,rM]$. 
\end{proof}

\section{\bf Proof of \autoref{th:main1}}
\label{sec:mainproof}

We apply the three step method of Laugesen and Liu \cite{LL16}, which in turn was inspired by the method of Antunes and Freitas \cite{AF13} for the case where $\Gamma$ is a quarter circle.

First we estimate the remainder terms in \autoref{th:asymptotic}(b), which by the hypotheses of \autoref{th:main1} satisfy
\begin{align}
\big|N(r,s)-r^2 &\area (\Gamma)+r(s^{-1}L+sM)/2\big| \notag \\
\leq O&(r^{2/3}) + s^{-3/2} O(r^{1-2a_2}) +s^{3/2} O(r^{1-2b_2}) + (s^{-3/2}+s^{3/2}) O(r^{1/2}) \notag \\
&+ (s^2+s^{-2}) O(1) + s^{-1} O(r^{1-2a_1}) + s O(r^{1-2b_1}) + O(1) \label{eq:estimateO}
\end{align}
whenever $r \ge \max ( s/L, 1/sM)$. Here the implied constants depend only on the curve $\Gamma$ and not on $s$. 

Next we show $S(r)$ is bounded above and away from $0$. Applying \autoref{eq:estimateO} with $s=1$ gives that 
\[
r^2 \area(\Gamma)-cr/2 \leq N(r,1)
\]
for all large $r$, where the constant $c>0$ depends only on the curve $\Gamma$. Suppose $r$ is large enough that this estimate holds, and also that $r$ exceeds the constant $C$ in \autoref{lemma:bound2}.
Let $s \in S(r)$. Then $r \geq 2s/L$ by \autoref{lemma:bound2}, and so \autoref{prop:counting_positive} (which uses  convexity of the curve $\Gamma$) applies to give
\[
N(r,s) \leq {r^2}\area(\Gamma)-f(L/2)rs/2 .
\]
Naturally $N(r,1) \leq N(r,s)$, because $s \in S(r)$ is a maximizing value. Thus combining the preceding inequalities shows that $s \leq c/f(L/2)$, and so the set $S(r)$ is bounded above for all large $r$. Interchanging the roles of the horizontal and vertical axes, we similarly find $s^{-1}$ is bounded, and hence $S(r)$ is bounded away from $0$ for all large $r$. 

Lastly we show $S(r)$ approaches $\{1\}$ as $r \to \infty$. Let $s \in S(r)$, so that by above, $s$ and $s^{-1}$ are bounded above for all large $r$. Then the right side of \autoref{eq:estimateO} has the form $O(r^{1-2e})$, with the implied constant being independent of $s$; recall the exponent $e$ was defined in \autoref{th:main1}. Since $r \ge 2 \max ( s/L, 1/sM)$ by \autoref{lemma:bound2}, we see from \autoref{eq:estimateO} that 
\begin{align*}
N(r,s) & \leq r^2\area (\Gamma)-r(s^{-1}L+sM)/2 + O(r^{1-2e}) , \\
N(r,1) & \geq r^2\area (\Gamma)-r(L+M)/2 - O(r^{1-2e}) ,
\end{align*}
as $r \to \infty$. Using again that $N(r,1)\leq N(r,s)$, we deduce
\begin{equation} \label{eq:sinverse}
s^{-1}L+sM \leq L+M +O(r^{-2e}) .
\end{equation}
Taking $L=M$ gives $s^{-1}+s \leq 2 + O(r^{-2e})$, and so $s=1+O(r^{-e})$ by \autoref{le:squarecompletion} below, which proves the first claim in the theorem. For the second claim, when $s \in S(r)$ we have
\[
N(r,s) = r^2\area (\Gamma)-rL + O(r^{1-2e}) 
\]
by \autoref{eq:estimateO}, using also that $1 \leq (s+s^{-1})/2 \leq 1+O(r^{-2e})$ by \autoref{eq:sinverse} with $L=M$.

\begin{lemma}[An elementary comparison used above] \label{le:squarecompletion}
\[
s+s^{-1} \leq 2 +t \quad \Longrightarrow \quad | s - 1 | \leq 3\sqrt{t}
\]
whenever $s>0$ and $0<t<1$. 
\end{lemma}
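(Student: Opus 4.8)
The plan is to turn the hypothesis $s + s^{-1} \le 2 + t$ into a quadratic inequality in $s$ and then extract the bound on $|s-1|$ by completing the square. First I would multiply through by $s > 0$, which is legitimate and preserves the direction of the inequality, to obtain $s^2 + 1 \le (2+t)s$, i.e. $s^2 - 2s + 1 \le ts$, which is exactly $(s-1)^2 \le ts$. So the whole content of the lemma is the elementary implication $(s-1)^2 \le ts \implies |s-1| \le 3\sqrt{t}$.

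Next I would split into the two cases according to the size of $s$. If $s \le 4$, then $(s-1)^2 \le ts \le 4t$, so $|s-1| \le 2\sqrt t \le 3\sqrt t$. If instead $s > 4$, I claim this contradicts the hypothesis when $0 < t < 1$: indeed $(s-1)^2 \le ts$ forces $t \ge (s-1)^2/s = s - 2 + 1/s > s - 2 > 2 \ge 1$, contradicting $t < 1$. Hence the case $s > 4$ cannot occur, and the bound $|s-1| \le 3\sqrt t$ holds in the only surviving case. (One could equally well solve the quadratic $s^2 - (2+t)s + 1 \le 0$ explicitly to get $s$ between the two roots $\frac{2+t \pm \sqrt{(2+t)^2 - 4}}{2} = \frac{2+t \pm \sqrt{t^2+4t}}{2}$, and then bound $|s-1| \le \frac{\sqrt{t^2+4t}}{2} \le \frac{\sqrt{5t}}{2} \le 3\sqrt t$ using $t<1$; either route works, and the case-splitting version avoids manipulating the square root.)

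There is no real obstacle here — this is a one-line completion-of-the-square argument dressed up as a lemma so it can be cited cleanly in the proof of \autoref{th:main1}. The only mild care needed is to make sure the constant $3$ is actually attained by whichever crude bound is chosen and to use the hypothesis $t<1$ exactly once, to rule out the large-$s$ branch (or to bound $\sqrt{t^2+4t}$ by $\sqrt{5t}$). I would write it as follows.

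\begin{proof}
Since $s>0$, multiplying the hypothesis $s + s^{-1} \le 2+t$ by $s$ gives $s^2 + 1 \le (2+t)s$, that is,
\[
(s-1)^2 \le ts .
\]
If $s \le 4$ then $(s-1)^2 \le ts \le 4t$, so that $|s-1| \le 2\sqrt{t} \le 3\sqrt{t}$. On the other hand, if $s > 4$ then
\[
t \ge \frac{(s-1)^2}{s} = s - 2 + \frac{1}{s} > s - 2 > 2 ,
\]
which contradicts the assumption $t < 1$. Hence $s \le 4$ necessarily, and the bound $|s-1| \le 3\sqrt{t}$ holds.
\end{proof}
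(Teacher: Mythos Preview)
Your formal proof is correct and takes a genuinely different route from the paper's. The paper exploits the identity $(s^{1/2}-s^{-1/2})^2 = s+s^{-1}-2 \le t$, deduces $|s^{1/2}-1|\le t^{1/2}$ from the fact that $1$ lies between $s^{1/2}$ and $s^{-1/2}$, and then squares to obtain $|s-1|\le 2t^{1/2}+t \le 3t^{1/2}$ (using $t<t^{1/2}$). You instead multiply through by $s$ to reach $(s-1)^2\le ts$ and then handle the factor $s$ on the right by a case split, ruling out $s>4$ via $t<1$. Both arguments are one-liners; the paper's trick of passing to $s^{1/2}$ avoids any case analysis and makes the role of $t<1$ a single clean step at the end, while your version stays with $s$ and is perhaps more transparent about where the quadratic comes from.

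One small slip in your parenthetical alternative: the roots of $s^2-(2+t)s+1=0$ are $1+\tfrac{t}{2}\pm\tfrac{1}{2}\sqrt{t^2+4t}$, so the correct bound is $|s-1|\le \tfrac{t}{2}+\tfrac{1}{2}\sqrt{t^2+4t}$, not $\tfrac{1}{2}\sqrt{t^2+4t}$. The conclusion still holds, since $\tfrac{t}{2}+\tfrac{1}{2}\sqrt{5t}\le \tfrac{1+\sqrt{5}}{2}\sqrt{t}<3\sqrt{t}$ for $0<t<1$, but you would want to fix that line if you keep the remark.
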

\begin{proof} We have $( s^{1/2}-s^{-1/2})^2 = s + s^{-1} - 2 \leq t$, which implies $|s^{1/2}-s^{-1/2}| \leq t^{1/2}$. The number $1$ lies between $s^{1/2}$ and $s^{-1/2}$, and so $| s^{1/2} - 1 | \leq t^{1/2}$, which means $1 - t^{1/2} \leq s^{1/2} \leq 1 + t^{1/2}$. Now square both sides and use $t<t^{1/2}$ (since $t<1$). 
\end{proof}

\section{\bf Proof of \autoref{th:main3}}
\label{sec:main3proof}

First we need a two-term bound on the counting function in the closed first quadrant. Assume $f$ is convex and strictly decreasing on $[0,L]$, with $f(0)=M, f(L)=0$. Then we have the following analogue of \autoref{prop:counting_positive}. 

\begin{proposition}[Two-term lower bound on counting function]\label{prop:counting_nonneg} The number of nonnegative-integer lattice points lying inside $r\Gamma(s)$ in the closed first quadrant satisfies
\[
\mathcal{N}(r,s)\geq r^2\area(\Gamma) + \frac{1}{2}Mrs , \qquad r,s>0 .
\]
\end{proposition}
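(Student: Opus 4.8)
The plan is to imitate the proof of \autoref{prop:counting_positive}, but placing unit squares \emph{below and to the left} of each counted lattice point rather than above and to the right, and then \emph{adding} a triangle's worth of surplus area instead of subtracting one. As in that proof, it suffices to treat the case $r=s=1$, assuming only that $f$ is convex and non-increasing on $[0,L]$ with $f(0)=M$ and $f(L)=0$: the general case follows by applying the special case to the curve $r\Gamma(s)$, which is the graph of the convex strictly decreasing function $h(x)=rsf(sx/r)$ on $[0,rs^{-1}L]$, has $y$-intercept $h(0)=rsM$, and encloses area $\int_0^{rs^{-1}L}h(x)\ud x=r^2\area(\Gamma)$ after the substitution $u=sx/r$. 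Since the closed-quadrant count $\cN(r,s)$ for $\Gamma$ is exactly the corresponding count for $h$, this yields $\cN(r,s)\ge r^2\area(\Gamma)+\tfrac12(rsM)$.

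For $r=s=1$, note that $\cN(1,1)$ equals the total area of the interior-disjoint unit squares $[j,j+1]\times[k,k+1]$ whose \emph{lower-left} vertices $(j,k)$ are counted lattice points, i.e.\ those with $0\le j\le\lfloor L\rfloor$ and $0\le k\le\lfloor f(j)\rfloor$. Let $U$ be their union. First I would check that $U$ contains the region $R$ under $\Gamma$: given $(x,y)\in R$, the lattice point $(\lfloor x\rfloor,\lfloor y\rfloor)$ is counted, because $\lfloor x\rfloor\le L$ forces $\lfloor x\rfloor\le\lfloor L\rfloor$, and $\lfloor y\rfloor\le y\le f(x)\le f(\lfloor x\rfloor)$ since $f$ is decreasing, so $\lfloor y\rfloor\le\lfloor f(\lfloor x\rfloor)\rfloor$; moreover its square contains $(x,y)$. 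Hence $\cN(1,1)=\area(U)=\area(\Gamma)+\area(U\setminus R)$, and it remains only to show $\area(U\setminus R)\ge M/2$.

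To estimate the surplus, slice $U$ and $R$ by the vertical strips $[j,j+1]\times\R$ for $j=0,1,\dots,\lfloor L\rfloor$. The part of $U$ in the $j$-th strip is the rectangle $[j,j+1]\times[0,\lfloor f(j)\rfloor+1]$, of area $\lfloor f(j)\rfloor+1> f(j)$, while the part of $R$ there has area $\int_j^{j+1}f(x)\ud x$ (extending $f$ by $0$ past $L$). Thus the surplus contributed by the $j$-th strip is at least $f(j)-\int_j^{j+1}f(x)\ud x=\int_j^{j+1}\bigl(f(j)-f(x)\bigr)\ud x$. By convexity, $f$ lies below the chord joining $(j,f(j))$ to $(j+1,f(j+1))$, so $f(j)-f(x)\ge (x-j)\bigl(f(j)-f(j+1)\bigr)$ on $[j,j+1]$, and integrating gives a surplus of at least $\tfrac12\bigl(f(j)-f(j+1)\bigr)$. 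Summing over $j$ telescopes to $\tfrac12\bigl(f(0)-f(\lfloor L\rfloor+1)\bigr)=\tfrac12 M$, since $f(\lfloor L\rfloor+1)=0$. This gives $\cN(1,1)\ge\area(\Gamma)+M/2$, completing the argument.

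I do not expect a real obstacle, since this is just the "dual" of \autoref{prop:counting_positive}. The only points needing a little care are the bookkeeping in the last strip $j=\lfloor L\rfloor$ — where one should observe that the zero-extension of $f$ past $L$ is still convex, so that the chord inequality remains valid on all of $[\lfloor L\rfloor,\lfloor L\rfloor+1]$ — and making sure the reduction to $r=s=1$ tracks the intercept as $M\mapsto rsM$ and the enclosed area as $\area(\Gamma)\mapsto r^2\area(\Gamma)$.
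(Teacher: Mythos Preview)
Your proof is correct and follows essentially the same route as the paper's: reduce to $r=s=1$, observe that the union of unit squares with lower-left corners at the counted lattice points covers the region under $\Gamma$, and then extract a surplus of $\tfrac12(f(j)-f(j+1))$ per vertical strip via convexity (the chord lies above the curve), telescoping to $M/2$. The paper phrases the surplus geometrically as explicit right triangles above the chords, whereas you compute it via the integral $\int_j^{j+1}(f(j)-f(x))\,\mathrm{d}x$ and the chord inequality, but these are the same triangles; your remark about the zero-extension of $f$ past $L$ remaining convex is exactly the care needed for the final strip, and the paper handles it by an analogous extra triangle with vertex at $(\lceil L\rceil,0)$.
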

\begin{proof}
We need only prove the special case where $r=s=1$, because applying that case to the curve $r\Gamma(s)$ (which has vertical intercept $Mrs$) yields the general case of the proposition.

Clearly $\mathcal{N}(1,1)$ equals the total area of the squares of sidelength $1$ having lower left vertices at nonnegative integer lattice points inside the curve $\Gamma$. The union of these squares contains $\Gamma$, since the curve is decreasing.

\begin{figure}
\includegraphics[scale=0.4]{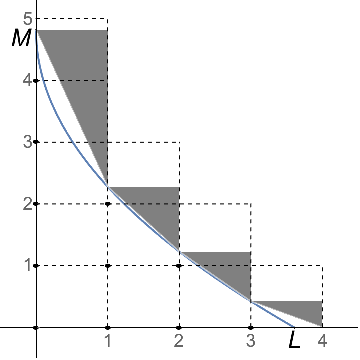}
\caption{\label{fig:counting_nonneg}Nonnegative integer lattice count $\mathcal{N}(1,1) \geq \area(\Gamma) + \area(\text{triangles})$, in proof of \autoref{prop:counting_nonneg}.}
\end{figure}

Consider the right triangles lying above chords of $\Gamma$, as shown in Figure~\ref{fig:counting_nonneg}. That is, for $i=1,\dots,\lfloor L \rfloor$ we take the triangle with vertices $(i-1,f(i-1)), (i,f(i)), (i,f(i-1))$, and the final triangle has vertices at $(\lfloor L \rfloor,f(\lfloor L \rfloor)), (\lceil L \rceil,0), (\lceil L \rceil,f(\lfloor L \rfloor))$.  

These triangles all lie above $\Gamma$, by concavity, and lie inside the collection of squares of sidelength $1$. Hence
\[
\mathcal{N}(1,1) \geq \area(\Gamma) + \area(\text{triangles}) = \area(\Gamma) + \frac{1}{2}M.
\]
\end{proof}

\subsection*{Proof of \autoref{th:main3}}
The number of lattice points lying on the axes and inside $r\Gamma(s)$ is 
\[
\lfloor Lr/s\rfloor+\lfloor Mrs\rfloor+1=Lr/s+Mrs+\rho(r,s) 
\]
where the error satisfies $|\rho(r,s)|\leq 1$. Thus $\mathcal{N}(r,s)$ and $N(r,s)$ (which, respectively, include and exclude the count of points on the axes) are connected by the formula
\[
\mathcal{N}(r,s)=N(r,s)+r(s^{-1}L +sM) +\rho(r,s) .
\]
Thus by estimate \autoref{eq:estimateO} from the proof of \autoref{th:main1} we have the asymptotic estimate
\begin{align}
\big|\mathcal{N}(r,s)&-r^2\area (\Gamma)-r(s^{-1}L+sM)/2\big| \notag \\
\leq O&(r^{2/3}) + s^{-3/2} O(r^{1-2a_2}) +s^{3/2} O(r^{1-2b_2}) + (s^{-3/2}+s^{3/2}) O(r^{1/2}) \notag \\
&+ (s^2+s^{-2}) O(1) + s^{-1} O(r^{1-2a_1}) + s O(r^{1-2b_1}) + O(1) \label{eq:asymptotic_nonneg}
\end{align}
whenever $r \ge \max ( s/L, 1/sM)$. 

Next we show that $\mathcal{S}(r)$ is bounded above and bounded below away from $0$.
Applying \autoref{eq:asymptotic_nonneg} with $s=1$ establishes that
\begin{equation} \label{eq:squarebound_nonneg}
r^2 \area(\Gamma)+cr/2 \geq \mathcal{N}(r,1)
\end{equation}
for all large $r$, where the constant $c>0$ depends only on the curve $\Gamma$. Suppose $r$ is large enough that this estimate holds. Let $s \in \mathcal{S}(r)$. Then \autoref{prop:counting_nonneg} applies to give
\[
\mathcal{N}(r,s) \geq {r^2}\area(\Gamma)+Mrs/2 .
\]

Since $s$ is a minimizer for the counting function $\mathcal{N}(r,\cdot)$ we must have $\mathcal{N}(r,1) \geq \mathcal{N}(r,s)$, and so the inequalities above imply that $s \leq c/M$. In other words, the set $\cS(r)$ is bounded above for all large $r$. Swapping the roles of the horizontal and vertical axes, we find by the same reasoning that $s^{-1}$ is bounded above, and hence the set $\cS(r)$ is bounded below away from $0$, for all large $r$.

Finally, we show $\mathcal{S}(r)$ approaches $\{1\}$ as $r \to \infty$. Let $s \in \mathcal{S}(r)$, so that $s$ and $s^{-1}$ are bounded above by Step~2, provided $r$ is large. Then the right side of estimate \autoref{eq:asymptotic_nonneg} is bounded by $O(r^{1-2e})$, with the implied constant being independent of $s$ and depending only on the curve $\Gamma$. From two applications of estimate \autoref{eq:asymptotic_nonneg} we deduce 
\begin{align*}
\mathcal{N}(r,s) & \geq r^2\area (\Gamma)+r(s^{-1}L+sM)/2 - O(r^{1-2e}) , \\
\mathcal{N}(r,1) & \leq r^2\area (\Gamma)+r(L+M)/2 + O(r^{1-2e}) ,
\end{align*}
as $r \to \infty$. Recalling that $\mathcal{N}(r,1) \geq \mathcal{N}(r,s)$ because $s$ is a minimizer, we deduce 
\[
s^{-1}L+sM \leq L+M +O(r^{-2e}) .
\]
Suppose $L=M$. Then $s=1+O(r^{-e})$ as $r \to \infty$ by \autoref{le:squarecompletion}. Also, estimate \autoref{eq:asymptotic_nonneg} implies for $s \in \mathcal{S}(r)$ that 
\[
\mathcal{N}(r,s) = r^2\area (\Gamma)+rL + O(r^{-2e}) , 
\]
where we used that $(s^{-1}+s)/2=1+O(r^{1-2e})$ by above.

\section*{Acknowledgments}
This research was supported by a grant from the Simons Foundation (\#429422 to Richard Laugesen), and by travel funding from the conference \emph{Shape Optimization and Isoperimetric and Functional Inequalities} at CIRM Luminy, France, November 2016.
Ariturk was supported by CAPES and IMPA of Brazil through the program P\'os-Doutorado de Excel\^encia.
We thank Shiya Liu for sharing the Mathematica files from which several figures in this paper were created.

\end{document}